\newcommand{\Z}{{\bf Z}}
\newcommand{\R}{{\bf R}}
\newcommand{\N}{{\bf N}}
\newcommand{\cl}{{\rm cl}_{L^2}\,}
\theoremstyle{plain}
\newtheorem{theorem}{Theorem}
\newtheorem{lemma}[theorem]{Lemma}
\newtheorem{corollary}[theorem]{Corollary}
\newtheorem{remark}[theorem]{Remark}
\theoremstyle{definition}
\def\st{{:}\,\ }
\title[Compactly supported orthogonal polynomial MRA]{Compactly supported, orthogonal, continuous piecewise polynomial multiresolution analysis}
\author[L~Fernandez]{Lidia~Fern\'andez}
\thanks{LF was supported in part by  PID2023-149117NB-I00, PID2024-155133NB-I00 and CEX 2020-001105-M, all funded by ``Ministerio de Ciencia, Innovaci\'on y Universidades'' MICIU/AEI/10.13039/501100011033 and ERDF/EU, Spain.}
\address{LF, Instituto de Matem\'aticas and Departamento de Matematica Aplicada, Universidad de Granada, Granada, Spain}
\email{lidiafr@ugr.es}
\author[J.~Geronimo]{Jeffrey~S.~Geronimo}
\address{JG, School of Mathematics, Georgia Institute of Technology, Atlanta, GA 30332--0160, USA}
\email{geronimo@math.gatech.edu}
\author[P.~Iliev]{Plamen~Iliev}
\thanks{PI gratefully acknowledges support by a grant from the Simons Foundation.}
\address{PI, School of Mathematics, Georgia Institute of Technology, Atlanta, GA 30332--0160, USA}
\email{iliev@math.gatech.edu}
\subjclass[2020]{41A15, 42C40, 33C20}
\keywords{Wavelets, Scaling Functions, Multiresolution Analysis, Hypergeometric functions, Piecewise polynomial.}
\theoremstyle{remark}
\newtheorem{rem}{Remark}
\def\hypergeom#1#2#3#4#5{{}_#1 F_{#2}\left({#3\atop#4}; \ #5\right)}
\begin{document}

\date{June 20, 2026}

\maketitle 

\begin{abstract}
We present explicit representations in terms of hypergeometric functions for the scaling functions in the $C^0$ orthogonal multiresolution analyses associated with piecewise continuous polynomials. Closed formulas for the  Mellin transform of these functions as well as their Fourier transforms are derived. Some new multiresolution analyses whose scaling functions have coefficients that are  rational numbers are introduced and discussed.
\end{abstract}

\section{Introduction}
Multiwavelets \cite{abcr}, \cite{gl}, \cite{hkm}, \cite{h},  have played a useful role in numerical analysis, functional analysis, and special functions \cite{bcc}, \cite{bcd},  \cite{dss}, \cite{dgha}, and \cite{ggl}.    The Alpert  multiresolution analysis \cite{abcr} was one of the first examples of a multiresolution analysis (MRA)  based on special functions (the Legendre polynomials). This MRA was further studied in \cite{gm}  and a compactly supported wavelet basis of generalized hypergeometric functions was constructed in \cite{gi}. Furthermore the connection with multiple orthogonal polynomials was considered in \cite{glv}. Since the multiwavelets constructed in the Alpert multiresolution analysis do not provide basis functions that are continuous on the real line
an alternative set of multiresolution analyses were developed in \cite{dgha}. These intertwining multiresolution analyses based on classes of ultraspherical polynomials allowed the construction of multiwavelets where the basis functions are at least continuous, however not all of the scaling functions were represented in terms of special functions. In this paper we begin by introducing a set of biorthogonal 'ramp' functions that have explicit representations in terms of Jacobi orthogonal polynomials.  Next we reconsider the $C^0$ case and show, for the first time, that all the scaling functions have simple formulas in terms of hypergeometric polynomials. These results help to elucidate the connection between special functions and piecewise polynomial multiresolution analyses.

The paper is organized as follows: In section 2 the notation is set and we review the elements of multiresolution analysis needed in the later sections.  In section 3 we introduce the orthogonal polynomials that will be used and also review formulas and recurrences in hypergeometric function theory. A set of biorthogonal 'ramp' functions are introduced that are given by  Jacobi polynomials which  simplifies some of the results developed in \cite{dgha}. We then investigate  multiresolution analyses that have $C^0$ generators and give explicit representations of the useful functions in this MRA in terms of hypergeometric polynomials. The Mellin transform of these functions is computed. These results are used to give explicit formulas  for the scaling functions computed in \cite{dgha} and also to develop new multiresolution analyses whose scaling function generators  are polynomials with rational coefficients  which may allow for efficient coding. 
We also note that the generators are symmetric or antisymmetric with respect to either $0$ or $\frac{1}{2}$. In section 5 we use the above results to explicitly compute the Fourier transform of the various functions used in these multiresolution analyses. In section 6 the $(n+1)\times (n+1)$ matrix coefficients in the refinement equation are considered and relations between their entries, based on the symmetries of the scaling functions discussed previously, are illuminated. An algorithm is outlined for the computation of the coefficients in the matrix equation giving the wavelets functions in terms of the scaling functions.  Finally following \cite{dgha} we describe using the symmetry of the wavelets and scaling functions how to easily modify these bases  for use on compact intervals.

\section{Preliminaries}
Let
$\phi_0,\dots,\phi_r$ be compactly supported $L^2$-functions, $\Phi_r=(\phi_0,\dots,\phi_r)^T, \ r\in\N$ and suppose that
\begin{align}\label{multiphi}
&V_0 = V_0(\Phi_r)=\cl{\rm span}\{\phi_i({\cdot}-j): i = 0,1,\dots,r,\ j\in\Z\}\ \text{and}\nonumber\\& V_p = \{f(2^p{\cdot}): f\in V_0, \ p\in\Z\} .
\end{align}  
Then
$V_0$ is called a  finitely generated shift invariant (FSI)
space.  The sequence $(V_p)_{p=-\infty}^{\infty}$ is called a
 multiresolution analysis generated by {$\phi_0,\dots,\phi_r$} if
(a) the spaces are nested, i.e. $\cdots\subset V_{-1}\subset V_0\subset V_1
\subset\cdots$, and (b) the generators $\phi_0,\dots,\phi_r$ and 
their integer translates form a Riesz basis for $V_0$ \cite{da}, \cite{ddr}, \cite{js}.  Because of (a)
and (b) above, we can write
$$
V_{j+1} = V_j \oplus W_j \quad\forall j\in\Z. 
$$
The space $W_0$ is called the wavelet space, and if
$\psi_0,\dots,\psi_r$ generate a shift-invariant basis for $W_0$, then
these functions are called  wavelet functions.
If, in addition, $\phi_0,\dots,\phi_r$ 
and their integer translates form an orthogonal basis for $V_0$,
then $(V_p)_{p=-\infty}^{\infty}$ is called an  orthogonal MRA.  In \cite{dgh} it is shown that  by adjusting $r$ we can always assume that $\phi_j$, $j = 0 , \ldots, r$  can be
chosen so that they are {\it minimally supported} on $[-1,1]$; i.e., each
scaling function has support in $[-1,1]$, and the nonzero restrictions of
the scaling functions and their integer translates to $[0,1]$ are linearly
independent.  Then (a) and (b) imply
that $\Phi_r$ satisfies the matrix dilation equation
\begin{equation}\label{refl}
\Phi_r(t) = \sum_{i=-2}^1 C_i (r)\Phi_r(2t-i). 
\end{equation}
Sometimes we will also use  $\Phi_r$  to denote the set of its entries $\phi_j$.  Following \cite{dgha} we divide $V_0\chi_{[0,1]}$ into the following subspaces:
\begin{itemize}
\item $ {\mathcal A}_0={\rm span}\{\phi\in\Phi_r\st\ {\rm supp}
  \;\phi \subset [0,1]\} = \{g\in V_0\st\ {\rm supp}\,g\subset [0,1]\}$
\item $ \mathcal{C}_0 = {\rm span}
  \{\phi({\cdot})\chi_{[0,1]}\st\ \phi\in\Phi_r\}\ominus \mathcal{A}_0$
\item
  $\mathcal{C}_1={\rm span}\{\phi ({\cdot}-1)\chi_{[0,1]}\st\ \phi\in\Phi_r\}\ominus \mathcal{A}_0$.
\end{itemize}
Since the functions in $\mathcal{A}_0$ are orthogonal to their integer
translates  and a basis of $\mathcal{A}_0$ may be made orthogonal by Gram-Schmidt, it is not
difficult to see \cite{dgha},
\begin{theorem}\label{theoremone}
$(V_p)_{p=-\infty}^{\infty}$ is an orthogonal MRA iff $\mathcal{C}_0\perp \mathcal{C}_1$.
\end{theorem}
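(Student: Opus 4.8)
The plan is to localize the orthogonality of the shift-invariant system $\{\phi^i(\cdot-j)\}$ to the single cell $[0,1]$, and to recognize that the only cross-cell inner products which can fail to vanish are exactly the pairings of $C_0$ against $C_1$. Since each $\phi^i$ is supported in $[-1,1]$, the translates $\phi^i(\cdot-j)$ and $\phi^k(\cdot-\ell)$ overlap on a set of positive measure only when $|j-\ell|\le 1$, so by translation invariance it suffices to control $\ip{\phi^i(\cdot),\phi^k(\cdot)}$ within one cell and $\ip{\phi^i(\cdot),\phi^k(\cdot-1)}$ across two consecutive cells (the case $j-\ell=-1$ being the adjoint of $j-\ell=1$). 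For the cross term the supports meet precisely in $[0,1]$, so
\begin{equation*}
\ip{\phi^i(\cdot),\phi^k(\cdot-1)}=\ip{\phi^i\chi_{[0,1]},\,\phi^k(\cdot-1)\chi_{[0,1]}},
\end{equation*}
an inner product between an element of ${\rm span}\{\phi\chi_{[0,1]}:\phi\in\Phi\}=A_0\oplus C_0$ and an element of ${\rm span}\{\phi(\cdot-1)\chi_{[0,1]}:\phi\in\Phi\}=A_0\oplus C_1$; the minimal-support and linear-independence hypotheses are what guarantee these splittings and the linear independence of the distinct nonzero cell restrictions involved.

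The forward implication is then immediate: if $\phi^0,\dots,\phi^r$ and their integer translates form an orthogonal basis of $V_0$, the relations with $j-\ell=1$ say exactly that ${\rm span}\{\phi\chi_{[0,1]}\}\perp{\rm span}\{\phi(\cdot-1)\chi_{[0,1]}\}$, and since $C_0$ and $C_1$ lie inside these two spans, $C_0\perp C_1$.

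For the converse, assume $C_0\perp C_1$. First orthonormalize a basis $\{\alpha_s\}$ of $A_0$ by Gram--Schmidt; being supported in $[0,1]$ these are automatically orthogonal to all of their nonzero integer translates, and the step does not affect $V_p$, $C_0$, or $C_1$. Next, for each of the remaining generators $\phi^i$ (those whose support meets both $[-1,0]$ and $[0,1]$) I would pass to $\psi=\phi^i-a-b$, where $a\in A_0$ cancels the $A_0$-component of $\phi^i\chi_{[0,1]}$ and $b$ is the translate by $-1$ of the element of $A_0$ that cancels the $A_0$-component of $\phi^i(\cdot-1)\chi_{[0,1]}$. Both $a$ and $b$ lie in $V_0$, so $\psi\in V_0$ is supported in $[-1,1]$, with $\psi\chi_{[0,1]}\in C_0$ and the translate by $1$ of $\psi\chi_{[-1,0]}$ in $C_1$. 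A final change of basis among these $\psi$'s diagonalizes the positive-definite Gram matrix of the same-cell inner products $\ip{\psi_t,\psi_u}=\ip{\psi_t\chi_{[0,1]},\psi_u\chi_{[0,1]}}+\ip{\psi_t\chi_{[-1,0]},\psi_u\chi_{[-1,0]}}$. For the resulting system $\{\alpha_s\}\cup\{\psi_t\}$ every inner product of two integer translates is either over a measure-zero overlap, or a same-cell term that vanishes by this diagonalization and by $C_0,C_1\perp A_0$, or a cross-cell term that restricts on $[0,1]$ to a pairing of $C_0$ (or $A_0$) against $C_1$ (or $A_0$), hence zero by $C_0\perp C_1$ together with $C_0,C_1\perp A_0$. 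Finally the cell restrictions of these translates still span $A_0\oplus C_0\oplus C_1=V_0\chi_{[0,1]}$, so by shift-invariance and the linear-independence hypothesis they generate $V_0$; thus $\{\alpha_s\}\cup\{\psi_t\}$ is an orthogonal basis of $V_0$ and $(V_p)$ is an orthogonal MRA.

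The main obstacle is the converse, specifically the construction of the modified generators $\psi$: one must cancel the $A_0$-components that appear in a generator's restriction to $[0,1]$ and, after a shift, in its restriction to $[-1,0]$, without disturbing the neighboring cell, and then confirm that the sole surviving cross-cell pairing is the $C_0$--$C_1$ one. Verifying that the modified translates still span all of $V_0$ rather than a proper subspace is the other point where the minimal-support and linear-independence assumptions carry the argument.
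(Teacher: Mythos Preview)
Your proposal is correct and follows the same line the paper indicates: the paper observes that functions in $A_0$ are orthogonal to their integer translates and that a basis of $A_0$ can be orthonormalized by Gram--Schmidt, then cites \cite{dgha} for the rest, which is precisely the localization-to-a-cell and $A_0$-component-stripping argument you carry out. In other words, the paper does not give a self-contained proof here, and your write-up is a faithful expansion of the sketched approach.
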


The MRAs we will study are those associated
with piecewise polynomial splines.  Let $S_k^n$ be the space of
polynomial splines of degree $n$ with $C^k$ knots at the integers,
and set $V_0^{n,k} = S_k^n \cap L^2(\R)$.  With  $V_p^{n,k}$ as above it
is known  that the integer shifts of the B-spline in $S_k^n$ form a Riesz basis for $V_0^{n,k} $ so that  these spaces give rise to  multiresolution analyses \cite{deb}, \cite{dgh}, \cite{gl}. If $k=-1$ (the spaces of piecewise polynomials of degree $n$ with integer knots)
then these multiresolution analyses are called the Alpert MRAs. 
Note that the dimension of  $V^{n,k}_0\chi_{[0,1]}$ is $n+1$ so the dimension of  $
\mathcal{A}^{n,k}_0$ is $n-2k-1$. Thus the dimensions of $\mathcal{C}^{n,k}_0$ and $\mathcal{C}^{n,k}_1$ are each $k+1$.

In order to find a compactly supported continuous orthogonal basis  for the above multiresolution analyses the theory of  intertwining MRAs  which were introduced in \cite{dgh} and \cite{dgha} will be used. We first develop useful bases for the above spaces. 

\section{Jacobi Polynomials}

We now introduce  some hypergeometric polynomials and identities that will be useful for the calculations below. Let $\{p^{\alpha,\beta}_n(x)\}_{n=0}^{\infty}, \ \alpha,\beta>-1$ be the monic Jacobi polynomials which are uniquely determined by 
$$
p_n^{\alpha,\beta}(x)=x^n + {\rm lower\ order\ terms\ in}\  x ,
$$
and
$$
\int_{-1}^1 p_n^{\alpha,\beta}(x) x^i (1-x)^{\alpha}(1+x)^{\beta}dx=0, \ 0\le i<n.
$$
These polynomials have the following representation in terms of
hypergeometric functions.
\begin{equation}\label{hyperone}
p_n^{\alpha,\beta}(t) = \frac{2^n(\alpha+1)_n}{(n+\alpha+\beta+1)_n}
  \,\hypergeom21{-n,\ n+\alpha+\beta+1}{\alpha+1}{\frac{1-t}{2}},
\end{equation}
where formally,
$$
\hypergeom{p}{q}{a_1,\ \dots\ a_p}{b_1,\ \dots\ b_q}{t}
  = \sum_{i=0}^{\infty}\frac{(a_1)_i\dots(a_p)_i}{(b_1)_i\dots(b_q)_i(1)_i}t^i
$$ 
with $(a)_0=1$ and $(a)_i = (a)(a+1)\ldots(a+i-1)$ for $i>0$.  Since one of the numerator 
parameters in the definition of $p_n^{\alpha,\beta}$ is a nonpositive integer the series has finitely many terms. When $\alpha=\beta=\lambda-\frac{1}{2}$ the Jacobi polynomials are called the ultraspherical polynomials, $p^{\lambda}_n$, and they have an alternate representation (see \cite{sz}, equations 4.7.30 and 4.7.31) given by,
\begin{equation}\label{altdefe}
p_{2n+\epsilon}^{\lambda}(t) = \frac{(-1)^{n}(\frac{1}{2}+\epsilon)_n}{(n+\lambda+\epsilon)_n} t^{\epsilon}
  \,\hypergeom21{-n,\ n+\lambda+\epsilon}{\frac{1}{2}+\epsilon}{t^2}, 
\end{equation}
for $\epsilon\in\{0,1\}$.
Some useful formulas are (\cite{aar},\cite{ba}):
\begin{itemize}
\item{Chu-Vandermonde formula},
$$
\hypergeom21{-n,a}{b}{1}=\frac{(b-a)_n}{(b)_n}
$$
and 
\item{Pfaff-Saalsch\"utz formula},
$$
\hypergeom32{-n,a,b}{c,-n+a+b-c+1}{1}=\frac{(c-a)_n(c-b)_n}{(c)_n(c-a-b)_n}.
$$
\end{itemize}
We also need the recurrence relations,
\begin{equation}\label{rec1}
b\ \hypergeom43{a,b+1,c,d}{e,f,g}{z}=c\ \hypergeom43{a,b,c+1,d}{e,f,g}{z}+(b-c)\ \hypergeom43{a,b,c,d}{e,f,g}{z},
\end{equation}
\begin{align}\label{rec2}
&(b-a)cdz\hypergeom43{a+1,b+1,c+1,d+1}{e+1,f+1,g+1}{z}\nonumber\\&=efg\left(\hypergeom43{a+1,b,c,d}{e,f,g}{z}-\hypergeom43{a,b+1,c,d}{e,f,g}{z}\right),
\end{align}
\begin{equation}\label{rec3}
bcz\hypergeom32{a+1,b+1,c+1}{d+1,e+1}{z}=de\left(\hypergeom32{a+1,b,c}{d,e}{z}-\hypergeom32{a,b,c}{d,e}{z}\right),
\end{equation}
\begin{align}\label{rec4}
&f\ \hypergeom43{a,b,c,d}{e+1,f,g}{z}\nonumber\\&=e\ \hypergeom43{a,b,c,d}{e,f+1,g}{z}-(e-f)\hypergeom43{a,b,c,d}{e+1,f+1,g}{z},
\end{align}
\begin{equation}\label{rec5}
d\ \hypergeom32{a,b,c}{d,e}{z}=a\ \hypergeom32{a+1,b,c}{d+1,e}{z}-(a-d)\hypergeom32{a,b,c}{d+1,e}{z},
\end{equation}
which can be checked formally term by term.
Below is a Lemma that will be used often:
\begin{lemma}\label{hyp1}
With $-n+a=d$,
\begin{equation}\label{hyp2}
\hypergeom32{-n+1,a+1,b+1}{d+1,b+2}{1}= (-1)^{n+1}\frac{(n-1)!(b+1)}{(d+1)_n}\left(1+(-1)^{n+1}\frac{(d-b)_n}{(b+1)_n}\right),
\end{equation}
while for $-n+a-1=d$,
\begin{align}\label{hyp3}
&\hypergeom32{-n+1,a+1,b+1}{d+1,b+3}{1}\nonumber\\&=\frac{d(b+2)}{an}\frac{(d-1)(b+1)}{(n+1)(a-1)}\left(\frac{(d-a)_{n+1}}{(d-1)_{n+1}}-\frac{(d-a)_{n+1}(d-b-1)_{n+1}}{(d-1)_{n+1}(d-a-b)_{n+1}}\right)\nonumber\\&+\frac{d(b+2)}{an}\frac{(d-a)_n(d-b)_n}{(d)_n(d-a-b)_n}.
\end{align}
\end{lemma}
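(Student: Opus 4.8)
The plan is to exploit that both series on the left are terminating ${}_3F_2(1)$'s that just fail to be Saalsch\"utzian --- in each, the sum of the lower parameters and the sum of the upper parameters coincide, whereas the Pfaff--Saalsch\"utz formula requires the former to exceed the latter by $1$ --- and that the hypotheses $-n+a=d$ and $-n+a-1=d$ are precisely what turn the reduction below into a sum of Saalsch\"utzian series. Start from the elementary identity $(a+1)_k=(a)_k+\frac{k}{a}(a)_k$; substituting it into $\hypergeom32{-n+1,a+1,b+1}{d+1,b+2}{1}$, writing $k/k!=1/(k-1)!$ in the term carrying the factor $k$, and reindexing that sum by $k\mapsto k+1$ gives the one-step reduction
\begin{equation*}
\hypergeom32{-n+1,a+1,b+1}{d+1,b+2}{1}=\hypergeom32{-n+1,a,b+1}{d+1,b+2}{1}+\frac{(-n+1)(b+1)}{(d+1)(b+2)}\,\hypergeom32{-n+2,a+1,b+2}{d+2,b+3}{1}.
\end{equation*}
Applying the same move to the last ${}_3F_2$ and iterating, the procedure stops after $n-1$ steps because the upper parameter $-n+1$ is advanced to $0$; the result is
\begin{align*}
\hypergeom32{-n+1,a+1,b+1}{d+1,b+2}{1}&=\sum_{j=0}^{n-2}\frac{(-n+1)_j(b+1)_j}{(d+1)_j(b+2)_j}\,\hypergeom32{-n+1+j,a,b+1+j}{d+1+j,b+2+j}{1}\\
&\quad+\frac{(-n+1)_{n-1}(b+1)_{n-1}}{(d+1)_{n-1}(b+2)_{n-1}}.
\end{align*}
Under $-n+a=d$ a one-line check shows each inner ${}_3F_2$ has the Saalsch\"utzian balance, so Pfaff--Saalsch\"utz evaluates it as an explicit ratio of Pochhammer symbols.

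It remains to perform the finite sum. Inserting the Pfaff--Saalsch\"utz values, the Pochhammer symbols cancel against those produced by Pfaff--Saalsch\"utz and everything collapses to a fixed multiple of the convolution $\sum_{j=0}^{n-1}(-1)^j(b+1)_j(d-b)_{n-1-j}$. This convolution is evaluated additively: with $u_j:=(-1)^j(b+1)_j(d-b)_{n-j}$ one checks directly that $u_j-u_{j+1}=(d+n)(-1)^j(b+1)_j(d-b)_{n-1-j}$, whence the convolution equals $\dfrac{(d-b)_n-(-1)^n(b+1)_n}{d+n}$. Substituting this back and simplifying with $(b+1)_n=(b+1)(b+2)_{n-1}$ and $(d+1)_{n-1}(d+n)=(d+1)_n$ produces \eqref{hyp2}.

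Identity \eqref{hyp3} is proved by the same three moves --- reindexing reduction, Pfaff--Saalsch\"utz, telescoping --- applied to $\hypergeom32{-n+1,a+1,b+1}{d+1,b+3}{1}$ under $-n+a-1=d$; note it does not follow from \eqref{hyp2}, because that hypothesis shifts the balance in the wrong direction. The only new feature is that the Pfaff--Saalsch\"utz value of the $j$-th Saalsch\"utzian piece now carries an extra linear factor $n-j$, so the sum to be evaluated is the weighted convolution $\sum_{j=0}^{n-1}(-1)^j(n-j)(b+1)_j(d-b)_{n-1-j}$. This is dispatched by the telescoping ansatz $v_j=(-1)^j(b+1)_j(d-b)_{n-j}(\alpha j+\beta)$, with $\alpha,\beta$ chosen (uniquely, as a short linear computation shows) so that $v_j-v_{j+1}$ is proportional to $(-1)^j(n-j)(b+1)_j(d-b)_{n-1-j}$; the resulting closed form matches \eqref{hyp3} after routine simplification using $d-a=-n-1$. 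I expect the genuine work to be confined to this last stage --- picking the right telescoping partner (trivial for \eqref{hyp2}, a two-parameter ansatz for \eqref{hyp3}) and then grinding the Gamma/Pochhammer quotients into exactly the displayed shape; everything upstream is mechanical.
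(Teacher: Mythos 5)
Your argument is correct, and I verified its key steps: the one-step reduction obtained from $(a+1)_k=(a)_k+\tfrac{k}{a}(a)_k$ is a valid contiguous relation, the hypotheses $-n+a=d$ (resp.\ $-n+a-1=d$) make every peeled-off series Saalsch\"utzian at each stage of the iteration, the remainder after $n-1$ steps is exactly the $j=n-1$ term of your convolution, the telescoping identities hold (with $u_j-u_{j+1}=(d+n)(-1)^j(b+1)_j(d-b)_{n-1-j}$, and a solvable two-parameter ansatz in the weighted case), and the resulting closed forms reproduce \eqref{hyp2} and agree with \eqref{hyp3}. But your route is genuinely different from the paper's, which is a two-line reduction: it applies the contiguous relation \eqref{rec3} a single time (twice for \eqref{hyp3}), with the parameter roles chosen so that one of the resulting $_3F_2(1)$'s has an upper parameter cancelling the lower parameter $b+1$ and is thus summed by Chu--Vandermonde, while the other is Saalsch\"utzian and summed by Pfaff--Saalsch\"utz; no iteration or extra summation is needed. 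Note that your one-step reduction is precisely \eqref{rec3} read in the opposite direction (raising the $a$-type parameter at the cost of a fully shifted series), so you have rediscovered the same contiguous relation but deployed it iteratively: the cost is an $(n-1)$-fold iteration plus the evaluation of a convolution, with your telescoping argument effectively standing in for Chu--Vandermonde; the benefit is a self-contained derivation that uses only Pfaff--Saalsch\"utz and elementary telescoping, and that makes visible where the extra linear factor (and hence the more elaborate shape of \eqref{hyp3}) comes from. One small caveat to record if you write this up: the divisions by $a$, by $d+n=a$, and by $a-1$ require generic parameters, which is harmless since both sides are rational in $a,b,d$ (and the stated formulas already presuppose these denominators are nonzero), but it deserves a sentence.
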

\begin{proof}
Equation~\eqref{hyp2} follows from the application of the Chu-Vandermonde and the Pfaff-Saalsch\" utz formulas to equation~\eqref{rec3}. Equation~\eqref{hyp3} follows by using equation~\eqref{rec3} twice then using the Chu-Vandermonde and the Pfaff-Saalsch\" utz formulas.
\end{proof}
\begin{lemma}\label{abp1}
For $\alpha$ a nonnegative integer less than or equal to $n$ and $\beta>-1$,
\begin{align*}
I&=\int_{-1}^1 (1-t)^{\alpha}(1+t)^{\beta}p_{n-\alpha}^{\alpha,\beta+1}(t)p_{n-\alpha}^{\alpha,\beta+1}(-t)dt\\&=2^{2n+\beta-\alpha+1}\frac{(\alpha+1)_{n-\alpha}\Gamma(\beta+1)n!(-n+\alpha)_{n-\alpha}}{(n+\beta+2)_{n-\alpha}\Gamma(2n-\alpha+\beta+2)}.
\end{align*}
\begin{proof}
The use of \eqref{hyperone} and the orthogonality properties of the polynomials in the above integral yields
\begin{equation*}
I=\left(\frac{2^{n-\alpha}(\alpha+1)_{n-\alpha}}{(n+\beta+2)_{n-\alpha}}\right)^2\ \sum_{k=0}^{n-\alpha}\frac{(-n+\alpha)_k(n+\beta+2)_k}{(1)_k(\alpha+1)_k2^k}\int_{-1}^1(1-t)^{\alpha+k}(1+t)^{\beta}dt.
\end{equation*}
The integral gives
\begin{equation*}
\int_{-1}^1(1-t)^{\alpha+k}(1+t)^{\beta}dt=2^{\alpha+\beta+k+1}\frac{(\alpha+1)_k(\alpha)!\Gamma(\beta+1)}{(\alpha+\beta+2)_k\Gamma(\alpha+\beta+2)},
\end{equation*}
so that the terms containing $k$ give
\begin{equation*}
\sum_{k=0}^{n-\alpha}\frac{(-n+\alpha)_k(n+\beta+2)_k}{(1)_k(\alpha+\beta+2)_k}=\frac{(-n+\alpha)_{n-\alpha}}{(\alpha+\beta+2)_{n-\alpha}},
\end{equation*}
by the Chu-Vandermonde formula. This leads to the result.
\end{proof}
\end{lemma}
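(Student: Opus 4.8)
The plan is to collapse $I$ to a single terminating ${}_2F_1$ evaluated at $1$ and then invoke the Chu--Vandermonde formula; the key idea is to treat the two copies of the Jacobi polynomial asymmetrically, one via orthogonality and the other via its hypergeometric series \eqref{hyperone}. First I would rewrite the second factor, applying \eqref{hyperone} with $t\mapsto -t$, as
\begin{equation*}
p_{n-\alpha}^{\alpha,\beta+1}(-t)=\frac{2^{n-\alpha}(\alpha+1)_{n-\alpha}}{(n+\beta+2)_{n-\alpha}}\,\hypergeom21{-(n-\alpha),\ n+\beta+2}{\alpha+1}{\tfrac{1+t}{2}},
\end{equation*}
a polynomial of degree $n-\alpha$ in $\tfrac{1+t}{2}$. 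Substituting this series into $I$ produces the integrals $\int_{-1}^1(1-t)^{\alpha}(1+t)^{\beta+j}p_{n-\alpha}^{\alpha,\beta+1}(t)\,dt$ for $0\le j\le n-\alpha$. For $j\ge 1$ I would factor $(1+t)^{\beta+j}=(1+t)^{\beta+1}(1+t)^{j-1}$; since $\deg (1+t)^{j-1}=j-1<n-\alpha$, orthogonality of $p_{n-\alpha}^{\alpha,\beta+1}$ against the weight $(1-t)^{\alpha}(1+t)^{\beta+1}$ kills that term. Only the $j=0$ term survives, with series coefficient $1$, leaving
\begin{equation*}
I=\frac{2^{n-\alpha}(\alpha+1)_{n-\alpha}}{(n+\beta+2)_{n-\alpha}}\int_{-1}^1(1-t)^{\alpha}(1+t)^{\beta}\,p_{n-\alpha}^{\alpha,\beta+1}(t)\,dt.
\end{equation*}

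For the remaining integral I would again use \eqref{hyperone}, now expanding $p_{n-\alpha}^{\alpha,\beta+1}(t)$ in powers of $\tfrac{1-t}{2}$, so that each term reduces to a Beta integral
\begin{equation*}
\int_{-1}^1(1-t)^{\alpha+k}(1+t)^{\beta}\,dt=2^{\alpha+\beta+k+1}\frac{\alpha!\,(\alpha+1)_k\,\Gamma(\beta+1)}{(\alpha+\beta+2)_k\,\Gamma(\alpha+\beta+2)},
\end{equation*}
after writing $\Gamma(\alpha+k+1)=\alpha!\,(\alpha+1)_k$ and $\Gamma(\alpha+\beta+k+2)=\Gamma(\alpha+\beta+2)(\alpha+\beta+2)_k$. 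The factor $(\alpha+1)_k$ cancels against the $(\alpha+1)_k$ in the hypergeometric coefficient, and the $k$-sum becomes $\hypergeom21{-(n-\alpha),\ n+\beta+2}{\alpha+\beta+2}{1}$, which the Chu--Vandermonde formula evaluates to $\frac{(-n+\alpha)_{n-\alpha}}{(\alpha+\beta+2)_{n-\alpha}}$.

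It remains only to collect constants: multiplying the squared prefactor $\bigl(\tfrac{2^{n-\alpha}(\alpha+1)_{n-\alpha}}{(n+\beta+2)_{n-\alpha}}\bigr)^2$ by $2^{\alpha+\beta+1}\tfrac{\alpha!\,\Gamma(\beta+1)}{\Gamma(\alpha+\beta+2)}\cdot\tfrac{(-n+\alpha)_{n-\alpha}}{(\alpha+\beta+2)_{n-\alpha}}$ and using $(\alpha+1)_{n-\alpha}\,\alpha!=n!$, $\Gamma(\alpha+\beta+2)(\alpha+\beta+2)_{n-\alpha}=\Gamma(n+\beta+2)$, and $\Gamma(n+\beta+2)(n+\beta+2)_{n-\alpha}=\Gamma(2n-\alpha+\beta+2)$ collapses everything to the stated closed form. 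I do not expect a genuine obstacle here; the only delicate point is the bookkeeping in this last reduction (arranging that the two $(n+\beta+2)_{n-\alpha}$ factors and the two $(\alpha+1)_{n-\alpha}$ factors recombine correctly), while keeping the hypotheses that $\alpha$ is a nonnegative integer $\le n$ and $\beta>-1$, which are what make the degree count and the Beta integral valid.
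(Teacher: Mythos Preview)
Your proof is correct and follows essentially the same route as the paper: you expand $p_{n-\alpha}^{\alpha,\beta+1}(-t)$ in powers of $(1+t)/2$ and use orthogonality to kill all but the $j=0$ term, then expand the surviving $p_{n-\alpha}^{\alpha,\beta+1}(t)$ in powers of $(1-t)/2$ to reduce to Beta integrals and a terminating ${}_2F_1(1)$ evaluated by Chu--Vandermonde. The paper compresses the first two steps into the single displayed sum with the squared prefactor, but the underlying argument is identical, and your constant bookkeeping at the end is accurate.
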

It easily follows (see \cite{dgha} Lemma 2.1) 
\begin{lemma}
For $n\ge 2k+2$, an orthogonal basis  for the functions in $\mathcal{A}^{n,k}_0$ supported in $[0,1]$ is $\{t^{k+1}(1-t)^{k+1}
p_i^{2k+\frac{5}{2}}(2t-1), \ i = 0,\dots,n-2k-2\}$, where each
$p_i^{2k+\frac{5}{2}}$  is a monic ultraspherical polynomial of degree $i$.
\end{lemma}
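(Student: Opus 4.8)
The plan is to reduce the statement to the orthogonality of a family of Jacobi polynomials on $[0,1]$. First I would pin down the elements of $A^{n,k}_0$ concretely. Any $g\in V^{n,k}_0$ with $\mathrm{supp}\,g\subset[0,1]$ restricts on $[0,1]$ to a polynomial of degree at most $n$, and since $g\in S^n_k$ is $C^k$ across the integers while vanishing identically outside $[0,1]$, the function $g$ together with its first $k$ derivatives must vanish at both $t=0$ and $t=1$. Hence $g|_{[0,1]}(t)=t^{k+1}(1-t)^{k+1}q(t)$ for some polynomial $q$ with $\deg q\le n-2k-2$; conversely, every function of this form, extended by $0$, lies in $S^n_k\cap L^2(\R)$ and is supported in $[0,1]$, hence belongs to $A^{n,k}_0$. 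This gives a linear isomorphism between $A^{n,k}_0$ and the space $\Pi_{n-2k-2}$ of polynomials of degree at most $n-2k-2$, consistent with the count $\dim A^{n,k}_0=n-2k-1$.

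Next I would transport the $L^2$ inner product through this isomorphism. For $q_1,q_2\in\Pi_{n-2k-2}$,
\[
\int_0^1 t^{k+1}(1-t)^{k+1}q_1(t)\; t^{k+1}(1-t)^{k+1}q_2(t)\,dt=\int_0^1 \bigl(t(1-t)\bigr)^{2k+2}q_1(t)\,q_2(t)\,dt ,
\]
and the substitution $t=(1+x)/2$ turns the right-hand side into a positive constant times $\int_{-1}^1(1-x)^{2k+2}(1+x)^{2k+2}q_1\bigl(\tfrac{1+x}{2}\bigr)q_2\bigl(\tfrac{1+x}{2}\bigr)\,dx$. Thus two elements $t^{k+1}(1-t)^{k+1}q_1$ and $t^{k+1}(1-t)^{k+1}q_2$ of $A^{n,k}_0$ are orthogonal in $L^2(\R)$ (equivalently in $L^2[0,1]$, since they are supported there) exactly when $q_1$ and $q_2$ are orthogonal with respect to the Jacobi weight $w^{2k+2,\,2k+2}$ after the affine change of variable, i.e.\ with respect to the ultraspherical weight with parameter $\lambda-\tfrac12=2k+2$, that is $\lambda=2k+\tfrac52$.

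Finally, taking $q_i(t)=p_i^{2k+5/2}(2t-1)$, the monic ultraspherical polynomial of degree $i$ rescaled to $[0,1]$, for $i=0,1,\dots,n-2k-2$, yields by the defining orthogonality of the $p_i^{\lambda}$ a mutually orthogonal family in $\Pi_{n-2k-2}$; since its members have pairwise distinct degrees they form a basis of $\Pi_{n-2k-2}$. Pulling back through the isomorphism shows that $\{t^{k+1}(1-t)^{k+1}p_i^{2k+5/2}(2t-1): i=0,\dots,n-2k-2\}$ is an orthogonal basis of $A^{n,k}_0$, as claimed (cf.\ \cite{dgha}, Lemma~2.1). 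The only genuinely nonroutine point is the first step, namely that membership of a compactly supported spline in $S^n_k$ forces full order-$(k+1)$ vanishing at the endpoints of its support; everything after that is the bookkeeping of the weight and a change of variable, and the hypothesis $n\ge 2k+2$ is exactly what makes $\Pi_{n-2k-2}$ nontrivial so that the index set is nonempty.
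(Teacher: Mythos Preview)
Your argument is correct and is exactly the natural one: characterize $A^{n,k}_0$ via the $C^k$ matching conditions at $0$ and $1$, which forces the factor $t^{k+1}(1-t)^{k+1}$, and then observe that the induced inner product on the quotient polynomial is the ultraspherical weight with parameter $2k+\tfrac52$ after the affine change $t\mapsto(1+x)/2$. The paper does not supply its own proof of this lemma at all---it simply cites \cite{dgha}, Lemma~2.1---and your write-up is essentially what that cited proof amounts to.
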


For later use we introduce the set of functions
\begin{equation}\label{phip}
\phi^k_i(t)=(1-t^2)^{k+1}p_{i-2k-2}^{2k+\frac{5}{2}}(t)
\end{equation}
 for $i\ge 2k+2$ which, from the above lemma, are orthogonal on $[-1,1]$ and for $i=2k+2, \ldots , n$ form an orthogonal basis  for $\mathcal{A}^{n,k}_0(\frac{\cdot +1}{2})$. We  set $\phi^k_i(t)=0$ for $i<2k+2$.

We now introduce the following  functions
\begin{equation}\label{rnki}
\tilde r^{n,k}_i(t)=(1-t)^i(1+t)^{k+1}p^{i+k+1,i+k+2}_{n-k-i-1}(t),\quad i=0,\ldots,k,\ n\ge2k+2
\end{equation}
and
\begin{equation}\label{lnki}
\tilde l^{n,k}_i(t)=\tilde r^{n,k}_i(-t).
\end{equation}
These functions have the following orthogonality properties,
\begin{theorem}\label{ramp}
For $n\ge 2k+2$ the set $\{\tilde r^{n,k}_i\}_{i=0}^{k}$ is orthogonal to $\mathcal{A}^{n,k}_0(\frac{\cdot + 1}{2})$ and biorthogonal  to the set  $\{\tilde l^{n,k}_i\}_{i=0}^{k} $  , hence it is a basis for $\mathcal{C}^{n,k}_0(\frac{\cdot+1}{2})$. Furthermore
\begin{equation}\label{ppnk}
\int_{-1}^1 \tilde r^{n,k}_i(t)\tilde l^{n,k}_i(t)dt=2^{2n+1}\frac{(n!)^2(-n+k+i+1)_{n-k-i-1}}{(n+k+i+3)_{n-k-i-1}(2n+1)!}.
\end{equation}
\end{theorem}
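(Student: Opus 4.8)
The plan is to reduce all three assertions of Theorem~\ref{ramp} to the orthogonality of the Jacobi polynomials on $[-1,1]$ together with the reflection identity $p^{\alpha,\beta}_m(-t)=(-1)^m p^{\beta,\alpha}_m(t)$ (immediate from \eqref{hyperone}), and to read off the constant \eqref{ppnk} from Lemma~\ref{abp1}. The one preparatory step is to rewrite the reflected functions: combining \eqref{rnki}, \eqref{lnki} and the reflection identity,
\begin{align*}
\hat l^{n,k}_{i}(t)&=(1+t)^{i}(1-t)^{k+1}p^{i+k+1,i+k+2}_{n-k-i-1}(-t)\\
&=(-1)^{n-k-i-1}(1+t)^{i}(1-t)^{k+1}p^{i+k+2,i+k+1}_{n-k-i-1}(t).
\end{align*}
After this move, every inner product I need is $\int_{-1}^1$ of $p^{i+k+1,i+k+2}_{n-k-i-1}$ (or of its partner $p^{i+k+2,i+k+1}_{n-k-i-1}$) against a polynomial, multiplied by powers of $(1-t)$ and $(1+t)$; in each case I peel those powers off so that what remains is the Jacobi weight against which one of the Jacobi factors is orthogonal, times a polynomial whose degree is strictly below that factor's degree.

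Orthogonality of $\{\hat r^{n,k}_i\}$ to $A^{n,k}_0(\frac{\cdot+1}{2})$: using the basis $\phi^k_j=(1-t^2)^{k+1}p^{2k+\frac{5}{2}}_{j-2k-2}$, $j=2k+2,\dots,n$, from \eqref{phip}, and writing $(1-t)^{i+k+1}(1+t)^{2k+2}=w^{i+k+1,i+k+2}(t)\,(1+t)^{k-i}$ (legitimate since $i\le k$), the integrand of $\int_{-1}^1\hat r^{n,k}_i\,\phi^k_j$ becomes $w^{i+k+1,i+k+2}(t)\,p^{i+k+1,i+k+2}_{n-k-i-1}(t)$ times the polynomial $(1+t)^{k-i}p^{2k+\frac{5}{2}}_{j-2k-2}(t)$, whose degree is $(k-i)+(j-2k-2)\le n-k-i-2<n-k-i-1$; hence the integral vanishes for all such $j$.

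For the biorthogonality and the value \eqref{ppnk}, substituting the rewritten $\hat l^{n,k}_{i'}$ gives
\begin{align*}
\int_{-1}^1\hat r^{n,k}_i(t)\hat l^{n,k}_{i'}(t)\,dt=(-1)^{n-k-i'-1}\int_{-1}^1(1-t)^{i+k+1}(1+t)^{i'+k+1}\,p^{i+k+1,i+k+2}_{n-k-i-1}(t)\,p^{i'+k+2,i'+k+1}_{n-k-i'-1}(t)\,dt.
\end{align*}
When $i<i'$ I factor $(1-t)^{i+k+1}(1+t)^{i'+k+1}=w^{i+k+1,i+k+2}(t)(1+t)^{i'-i-1}$ and absorb $(1+t)^{i'-i-1}$ into $p^{i'+k+2,i'+k+1}_{n-k-i'-1}$, leaving a polynomial of degree $n-k-i-2<n-k-i-1$; when $i>i'$ I symmetrically factor off $w^{i'+k+2,i'+k+1}(t)(1-t)^{i-i'-1}$ and absorb $(1-t)^{i-i'-1}$ into $p^{i+k+1,i+k+2}_{n-k-i-1}$, leaving degree $n-k-i'-2<n-k-i'-1$. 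In both cases orthogonality of the relevant Jacobi polynomial kills the integral, so $\int_{-1}^1\hat r^{n,k}_i\hat l^{n,k}_{i'}=0$ for $i\ne i'$. When $i=i'$ the integrand is $(-1)^{n-k-i-1}(1-t)^{i+k+1}(1+t)^{i+k+1}p^{i+k+1,i+k+2}_{n-k-i-1}(t)p^{i+k+2,i+k+1}_{n-k-i-1}(t)$; using $p^{i+k+2,i+k+1}_{n-k-i-1}(t)=(-1)^{n-k-i-1}p^{i+k+1,i+k+2}_{n-k-i-1}(-t)$ turns the integral (the two signs $(-1)^{n-k-i-1}$ cancelling) into exactly the integral $I$ of Lemma~\ref{abp1} with $\alpha=\beta=i+k+1$ and the $n$ there equal to the present $n$; the hypotheses of that lemma hold since $i+k+1\le 2k+1<n$. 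Evaluating its formula at $\alpha=\beta=i+k+1$ then gives
\begin{align*}
\int_{-1}^1\hat r^{n,k}_i(t)\hat l^{n,k}_i(t)\,dt=2^{2n+1}\,\frac{(i+k+2)_{n-k-i-1}\,(i+k+1)!\,n!\,(-n+k+i+1)_{n-k-i-1}}{(n+k+i+3)_{n-k-i-1}\,(2n+1)!},
\end{align*}
and the identity $(i+k+2)_{n-k-i-1}(i+k+1)!=n!$ reduces this to \eqref{ppnk}.

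Finally, each $\hat r^{n,k}_i$ is a polynomial of degree $i+(k+1)+(n-k-i-1)=n$ carrying the factor $(1+t)^{k+1}$; by the description of $\operatorname{span}\{\phi(\cdot)\chi_{[0,1]}:\phi\in\Phi\}(\frac{\cdot+1}{2})$ from \cite{dgha}, this boundary vanishing places it in that span, and together with the orthogonality to $A^{n,k}_0(\frac{\cdot+1}{2})$ just established this puts $\hat r^{n,k}_i$ in $C^{n,k}_0(\frac{\cdot+1}{2})$. Since \eqref{ppnk} is nonzero, the relations $\int_{-1}^1\hat r^{n,k}_i\hat l^{n,k}_j=\delta_{ij}\cdot(\text{nonzero})$ force $\hat r^{n,k}_0,\dots,\hat r^{n,k}_k$ to be linearly independent, and as $\dim C^{n,k}_0=k+1$ these $k+1$ functions form a basis of it. I expect the only real work to be the degree bookkeeping that licenses the Jacobi orthogonality in each factorization; once the reflection identity lines up the Jacobi parameters, the two diagonal sign factors cancel, and what remains is the Pochhammer identity above.
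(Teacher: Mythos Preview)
Your argument is correct and follows essentially the same route as the paper: both reduce orthogonality to $A^{n,k}_0$ and biorthogonality of $\{\hat r^{n,k}_i\}$ with $\{\hat l^{n,k}_i\}$ to Jacobi orthogonality via the same weight-factoring and degree count, and both invoke Lemma~\ref{abp1} with $\alpha=\beta=i+k+1$ for \eqref{ppnk}. The only cosmetic differences are that you invoke the reflection identity $p^{\alpha,\beta}_m(-t)=(-1)^m p^{\beta,\alpha}_m(t)$ explicitly at the outset and treat the case $i>i'$ by factoring off $w^{i'+k+2,i'+k+1}$ directly, whereas the paper handles it by the substitution $t\to -t$ reducing to the $i<j$ case; you also spell out the basis conclusion a bit more carefully than the paper does.
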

Note that the above functions are linear combinations of the 'ramp' functions  $r^{n,k}_i$ introduced in  \cite[section 2]{dgha}.
\begin{proof}
The first part of the Theorem follows since
\begin{align*}
&\int_{-1}^1\tilde r^{n,k}_i(t)\phi^k_j(t)dt\\&=\int_{-1}^1(1-t)^{i+k+1}(1+t)^{i+k+2}p^{i+k+1,i+k+2}_{n-k-i-1}(t)(1+t)^{k-i}p_{j-2k-2}^{2k+\frac{5}{2}}(t)dt=0,
\end{align*}
for $k+i+2\le 2k+2\le j\le n$  since $i\le k$. To see the second part of the Theorem observe that for $k$ fixed
\begin{align*}
I=&\int_{-1}^1 \tilde r^{n,k}_i(t) \tilde l^{n,k}_j(t)dt=\int_{-1}^1 \tilde r^{n,k}_i(t) \tilde r^{n,k}_j(-t)dt\\&=\int_{-1}^1 (1-t)^{i+k+1}(1+t)^{j+k+1}p^{i+k+1,i+k+2}_{n-k-i-1}(t) p^{j+k+1,j+k+2}_{n-k-j-1}(-t)dt.
\end{align*}
If $i<j\le n$ the above integral can be written as
\begin{equation*}
I=\int_{-1}^1 (1-t)^{i+k+1}(1+t)^{i+k+2}p^{i+k+1,i+k+2}_{n-k-i-1}(t) (1+t)^{j-i-1}p^{j+k+1,j+k+2}_{n-k-j-1}(-t)dt=0.
\end{equation*}
For $i>j$ let $t\to -t$  in the above integral and interchange the roles of $i$ and $j$ to see that it is equal to zero. The value of the integral in equation~\eqref{ppnk} follows from Lemma~\ref{abp1} with $\alpha=\beta=i+k+1$.
\end{proof}
\section{The $C^0$ case}
In this section we prove one of the main results of the paper which is to give simple hypergeometric representations to all of the scaling functions in the continuous case. In order to produce an orthogonal MRA with the functions above we will need the theory of intertwining MRA which was discussed in \cite{dgh}.  Let $P^{n,0}$ be the orthogonal projection on $\mathcal{A}^{n,0}_0(\frac{\cdot+1}{2})$ and $r^{n,0}_0$ be given as in \cite{dgha}, i.e. $r^{n,0}_0(t)=(I-P^{n,0})r^0_0(t)$ with $r^0_0=1+t, \ t\in[-1,1]$. Also set $l_0^0(t)=r_0^0(-t)$ so that $l_0^{n,0}(t)=r_0^{n,0}(-t)$. For ease of notation we will drop the second superscript in these functions and in the spaces. We now have

\begin{lemma}\label{rhatr}
\begin{equation}\label{rhr}
r^n_0(t)=\frac{2(2n+1)!!}{(n+2)!}\tilde r^n_0(t),
\end{equation}
thus
\begin{equation}\label{roro}
\langle r^n_0,l^n_0\rangle=\frac{(-1)^{n+1}8}{(n+2)(n+1)n}.
\end{equation}
Furthermore
\begin{align*}
r_0^n(t)&=(-1)^n(p_n^{\frac{3}{2}}(t)/p_n^{\frac{3}{2}}(-1)-p_{n-1}^{\frac{3}{2}}(t)/p^{\frac{3}{2}}_{n-1}(-1))\\&=\frac{(-1)^n}{{p_n^{\frac{3}{2}}(-1)p_{n-1}^{\frac{3}{2}}(-1)}}\left |\begin{matrix}p^{\frac{3}{2}}_n(t)& p^{\frac{3}{2}}_{n-1}(t)\\ p^{\frac{3}{2}}_n(-1)& p^{\frac{3}{2}}_{n-1}(-1)\end{matrix}\right| .
\end{align*}
\end{lemma}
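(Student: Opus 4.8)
The plan is to pin $r^n_0$ down by a short uniqueness criterion and then check that each of the three expressions in the statement satisfies it. Recall that $r^n_0=(I-P^{n,0})(1+t)$, where $P^{n,0}$ is the orthogonal projection of $L^2[-1,1]$ onto $A^{n,0}_0(\tfrac{\cdot+1}{2})$. By \eqref{phip} with $k=0$, the space $A^{n,0}_0(\tfrac{\cdot+1}{2})$ has orthogonal basis $\{(1-t^2)p^{5/2}_{i-2}(t):i=2,\dots,n\}$, so it is exactly $(1-t^2)$ times the polynomials of degree $\le n-2$; equivalently it consists of precisely those polynomials of degree $\le n$ vanishing at $t=\pm1$. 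Hence $r^n_0$ is a polynomial of degree $\le n$ which (i) is orthogonal to $A^{n,0}_0(\tfrac{\cdot+1}{2})$ and (ii) differs from $1+t$ by an element of $A^{n,0}_0(\tfrac{\cdot+1}{2})$; and if $f$ is any polynomial of degree $\le n$ with (i) and (ii), then $f-r^n_0$ lies in $A^{n,0}_0(\tfrac{\cdot+1}{2})\cap A^{n,0}_0(\tfrac{\cdot+1}{2})^{\perp}=\{0\}$, so $f=r^n_0$. It therefore suffices to verify (i) and (ii) for the candidates and then compute the normalizing constants.

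For \eqref{rhr}, let $g(t)=\hat r^n_0(t)/p^{1,2}_{n-1}(1)=(1+t)\,p^{1,2}_{n-1}(t)/p^{1,2}_{n-1}(1)$, a polynomial of degree $n$ (with $p^{1,2}_{n-1}(1)\neq0$). Property (i) is the first assertion of Theorem~\ref{ramp} with $k=i=0$. For (ii), $g(-1)=0$ and $g(1)=2$, so $g-(1+t)$ vanishes at $\pm1$ and has degree $\le n$, hence lies in $A^{n,0}_0(\tfrac{\cdot+1}{2})$; thus $g=r^n_0$. Evaluating \eqref{hyperone} at $t=1$, where the ${}_2F_1$ equals $1$, gives $p^{1,2}_{n-1}(1)=2^{n-1}(2)_{n-1}/(n+3)_{n-1}$, and using $(2)_{n-1}=n!$, $(n+3)_{n-1}=(2n+1)!/(n+2)!$ and $(2n+1)!=2^n n!\,(2n+1)!!$ this becomes $(n+2)!/\bigl(2(2n+1)!!\bigr)$, which is \eqref{rhr}. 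Since $l^n_0(t)=r^n_0(-t)=\tfrac{2(2n+1)!!}{(n+2)!}\hat l^n_0(t)$, formula \eqref{roro} follows by squaring that constant and multiplying by \eqref{ppnk} with $k=i=0$; using $(-n+1)_{n-1}=(-1)^{n-1}(n-1)!$ together with the same rewritings of $(n+3)_{n-1}$ and $(2n+1)!!$, the product collapses to $8(-1)^{n+1}/\bigl(n(n+1)(n+2)\bigr)$.

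For the ultraspherical and determinant formulas, set $F(t)=(-1)^n\bigl(p^{3/2}_n(t)/p^{3/2}_n(-1)-p^{3/2}_{n-1}(t)/p^{3/2}_{n-1}(-1)\bigr)$, a polynomial of degree $\le n$. Since $\lambda=\tfrac32$ corresponds to $\alpha=\beta=1$, the relevant weight is $w^{1,1}(t)=1-t^2$, so $p^{3/2}_m$ is orthogonal against $(1-t^2)\,dt$ to every polynomial of degree $<m$; hence for any polynomial $q$ of degree $\le n-2$ both $\int_{-1}^1 p^{3/2}_n(t)(1-t^2)q(t)\,dt$ and $\int_{-1}^1 p^{3/2}_{n-1}(t)(1-t^2)q(t)\,dt$ vanish, which gives (i). The parity relation $p^{3/2}_m(-1)=(-1)^m p^{3/2}_m(1)$ yields $F(-1)=0$ and $F(1)=2$, so $F-(1+t)$ vanishes at $\pm1$ and has degree $\le n$, giving (ii); by the uniqueness criterion $F=r^n_0$, and clearing denominators in $F$ produces the stated $2\times2$ determinant form.

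No deep difficulty is expected here. The points needing care are the identification of $A^{n,0}_0(\tfrac{\cdot+1}{2})$ with $(1-t^2)$ times the degree-$\le n-2$ polynomials — which is exactly what makes ``vanishing at $\pm1$ together with the degree bound'' equivalent to membership, and hence makes the uniqueness argument valid — and the Pochhammer/factorial bookkeeping in \eqref{rhr} and \eqref{roro}, where one must pass cleanly among $(2n+1)!$, $(2n+1)!!$ and $2^n n!$. I would carry out those computations in full, but expect them to be routine.
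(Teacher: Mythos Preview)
Your argument is correct and, for \eqref{rhr} and \eqref{roro}, coincides with the paper's: the paper simply notes that $C^{n,0}_0(\tfrac{\cdot+1}{2})$ is one-dimensional, so $r^n_0$ and $\hat r^n_0$ are proportional, finds the constant by evaluating at $t=1$, and then plugs \eqref{rhr} into \eqref{ppnk}; your uniqueness criterion together with the check $g(\pm1)=1\pm1$ is exactly a spelled-out version of that.

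The only genuine difference is in the third (ultraspherical/determinant) identity. The paper just invokes Szeg\H o's formula 4.5.4, whereas you reuse the same uniqueness criterion: $p^{3/2}_n$ and $p^{3/2}_{n-1}$ are orthogonal to $(1-t^2)\cdot\{\text{degree}\le n-2\}$, and the parity $p^{3/2}_m(-1)=(-1)^m p^{3/2}_m(1)$ forces $F(-1)=0$, $F(1)=2$. That is a nice self-contained alternative which avoids an external reference and stays entirely within the machinery already set up in the section; the paper's route is shorter but less transparent.
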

\begin{proof}
Since $\mathcal{C}^{n,0}_0(\frac{\cdot+1}{2})$ is one dimensional $r^{n}_0$ and $\tilde r^{n}_0(t)$ are related by a multiplicative  constant which can be found by setting $t=1$.
The second formula follows from equations~\eqref{ppnk} and \eqref{rhr} while the third formula follows from \cite[4.5.4]{sz}  taking into account the normalization used.
\end{proof}

We now give representations for some functions introduced in \cite{dgha} used to build an intertwining MRA. Let $\bar u_m\in C^0(\R)$ be given as
$$
\bar u_m(t) = \left\{
  \begin{matrix}1 - |t|\,t^{m-1}
      & {\rm if}\ t\in[-1,1]\hfill\\
    0 & {\rm otherwise}      \hfill\end{matrix}\right.
$$
for odd $m\ge 1$, and
$$
\bar u_m(t) = \left\{\begin{matrix}
  t - |t|\,t^{m-1} 
    & {\rm if}\ t\in[-1,1] \hfill\\
  0 & {\rm otherwise}      \hfill\end{matrix}\right.
$$
for even $m\ge 2$. For $n\ge 2$ let
\begin{equation}\label{unm}
u_{n,n-m}=(I-P^n)\bar u_{m}.
\end{equation}
 
Note that due to the parity of $\bar u_m$ and $\phi_i$, $\langle \bar u_m,\phi_i\rangle=0$ if $m$ and $i$ have the same parity.
We now prove
\begin{theorem}\label{rep1}
For $t\in [-1,1]$ and $\epsilon\in\{0,1\}$ the functions $u_{n,m}$ have the following representations
$$
u_{2n+\epsilon,2n-2m-1}(t)=u_{2n+1+\epsilon,2n-2m}(t)=-|t|t^{2m+\epsilon}+f^\epsilon_{m,n}(t),
$$
where 
\begin{align}\label{fe}
f^{\epsilon}_{m,n}(t)&= t^{\epsilon}\frac{(\frac{1}{2}+\epsilon)_{n} (n^2+(\frac{3}{2}+\epsilon)n+m+1+\epsilon)(-m+\frac{1}{2})_{n}}{(m+1+\epsilon)_{n+1}(n+1)!}\nonumber\\&\times 
\hypergeom43{-n,-n^2-(\frac{3}{2}+\epsilon)n-m-\epsilon,-m-\frac{1}{2},n+\frac{3}{2}+\epsilon}{\frac{1}{2}+\epsilon,-n^2-(\frac{3}{2}+\epsilon)n-m-1-\epsilon,-m+\frac{1}{2}}{t^2}.
\end{align}

\end{theorem}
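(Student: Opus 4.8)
The statement splits into two independent parts: the identity $u^0_{2n+\epsilon,2n-2m-1}=u^0_{2n+1+\epsilon,2n-2m}$, and the closed form of this common function. For the identity, observe from \eqref{unm} that both sides are obtained by applying $I-P^{n',0}$ to one and the same function $\bar u_{2m+1+\epsilon}$ --- with $n'=2n+\epsilon$ on the left and $n'=2n+1+\epsilon$ on the right. Now $\bar u_{2m+1+\epsilon}$ has parity $(-1)^\epsilon$ (even for $\epsilon=0$, odd for $\epsilon=1$), while by \eqref{phip} the one basis vector of $A^{2n+1+\epsilon,0}_0(\frac{\cdot+1}{2})$ absent from $A^{2n+\epsilon,0}_0(\frac{\cdot+1}{2})$ is $\phi^0_{2n+1+\epsilon}$, of the opposite parity; hence, as in the parity note preceding the theorem, $\langle\bar u_{2m+1+\epsilon},\phi^0_{2n+1+\epsilon}\rangle=0$, and since $\{\phi^0_i\}$ is orthogonal the two projections --- and therefore the two functions --- coincide.

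For the formula, use $\bar u_{2m+1+\epsilon}(t)=t^\epsilon-|t|t^{2m+\epsilon}$ on $[-1,1]$. Since every element of $A^{2n+\epsilon,0}_0(\frac{\cdot+1}{2})$ is a polynomial of degree $\le 2n+\epsilon$ vanishing at $\pm1$, the function $u:=u^0_{2n+\epsilon,2n-2m-1}=\bar u_{2m+1+\epsilon}-P^{2n+\epsilon,0}\bar u_{2m+1+\epsilon}$ has the form $-|t|t^{2m+\epsilon}+f$ with $f$ a polynomial of degree $\le 2n+\epsilon$ of parity $(-1)^\epsilon$, and $f$ is the \emph{unique} such polynomial with $f(1)=1$ (equivalently $t^\epsilon-f\in A^{2n+\epsilon,0}_0(\frac{\cdot+1}{2})$) and $\langle-|t|t^{2m+\epsilon}+f,\phi^0_i\rangle=0$ for $i=2,\dots,2n+\epsilon$: the difference of two such polynomials lies in $A^{2n+\epsilon,0}_0(\frac{\cdot+1}{2})$ by the first property and is orthogonal to it by the second, hence is zero. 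By parity only the indices $i\equiv\epsilon\pmod2$ are nontrivial, so this is $n+1$ conditions on the $n+1$ coefficients of $f$. The right side of \eqref{fe} is $t^\epsilon$ times a polynomial of degree $\le n$ in $t^2$ (its ${}_4F_3$ terminates at the numerator parameter $-n$), so it has the correct degree and parity, and it remains only to verify the two displayed properties for it.

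Checking $f(1)=1$ means evaluating the terminating ${}_4F_3$ of \eqref{fe} at $z=1$. Its parameters contain two ``defect'' pairs, $\{-m-\tfrac12,\,-m+\tfrac12\}$ and $\{-n^2-(\tfrac32+\epsilon)n-m-\epsilon,\ -n^2-(\tfrac32+\epsilon)n-m-1-\epsilon\}$, in each of which a numerator parameter exceeds a denominator parameter by $1$; the contiguous relations \eqref{rec1}--\eqref{rec5} then collapse the ${}_4F_3(\cdot\,;1)$ into a short combination of near-Saalsch\"utzian ${}_3F_2(\cdot\,;1)$'s of the type covered by Lemma~\ref{hyp1} (equivalently, summed by the Chu--Vandermonde and Pfaff--Saalsch\"utz formulas), and the prefactor in \eqref{fe} is exactly what is needed for the total to be $1$. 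For the orthogonality conditions, use \eqref{altdefe} with $\lambda=\tfrac52$ to write $\phi^0_i(t)=(1-t^2)p^{5/2}_{i-2}(t)$ with $i-2=2\ell+\epsilon$ as $(1-t^2)t^\epsilon$ times a ${}_2F_1(-\ell,\ell+\tfrac52+\epsilon;\tfrac12+\epsilon;t^2)$; the substitution $x=t^2$ turns each required inner product into an integral over $[0,1]$ of a power of $x$ times $(1-x)$ times that ${}_2F_1$. For the monomial part, integrating term by term and applying Chu--Vandermonde/Pfaff--Saalsch\"utz (again in the form of Lemma~\ref{hyp1}, since the emerging ${}_3F_2(\cdot\,;1)$ is off-Saalsch\"utzian by one) yields $\langle|t|t^{2m+\epsilon},\phi^0_i\rangle$ in closed form; integrating the ${}_4F_3$ of \eqref{fe} against the same ${}_2F_1$, with the contiguous relations used once more to lower the degree, yields $\langle f,\phi^0_i\rangle$; matching the two for every $\ell=0,\dots,n-1$ finishes the proof.

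The main obstacle is this last matching, which must hold for all $\ell$ at once: it comes down to a single hypergeometric identity rational in the parameters, and the exotic coefficient $-n^2-(\tfrac32+\epsilon)n-m-1-\epsilon$, quadratic in $n$, is precisely what it forces. The real labour is therefore the hypergeometric bookkeeping --- choosing which of \eqref{rec1}--\eqref{rec5} to apply, and in what order, so that everything reduces to Chu--Vandermonde, Pfaff--Saalsch\"utz, and Lemma~\ref{hyp1}. An equivalent, no shorter route is to compute the projection coefficients $\langle\bar u_{2m+1+\epsilon},\phi^0_i\rangle/\|\phi^0_i\|^2$ directly (numerators as above, norms from standard Jacobi-polynomial formulas) and resum $\sum_i c_i\phi^0_i$ by interchanging the order of summation and applying Chu--Vandermonde to the inner sum, where the same quadratic-in-$n$ parameter appears.
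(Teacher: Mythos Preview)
Your overall strategy is the paper's: characterize $f$ uniquely by its degree, parity, the boundary value $f(1)=1$, and orthogonality to $A^{2n+\epsilon,0}_0(\frac{\cdot+1}{2})$, then verify each of these for the right-hand side of \eqref{fe}. The parity argument for $u^0_{2n+\epsilon,2n-2m-1}=u^0_{2n+1+\epsilon,2n-2m}$ and the handling of $f(1)=1$ (split the ${}_4F_3$ via \eqref{rec1} into a ${}_3F_2$ and a ${}_2F_1$, then invoke Lemma~\ref{hyp1}) are exactly what the paper does in Lemma~\ref{fefo} and in the proof proper.

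Where you diverge is the orthogonality step, and here the paper has a device you are missing. You propose to test against the basis $\phi^0_i$ and match $\langle f,\phi^0_i\rangle$ to $\langle|t|t^{2m+\epsilon},\phi^0_i\rangle$ index by index --- the ``hypergeometric bookkeeping'' you yourself flag as the main labour. The paper instead tests against the monomials $t^i(1-t^2)$ (same span) and, to do so, first computes the Mellin transform $\hat f^\epsilon_{m,n}(z)=\int_0^1 t^{z-1}f^\epsilon_{m,n}(t)\,dt$ in closed form (Lemma~\ref{mt}). Each orthogonality condition then reduces to a difference of values of $\hat f^\epsilon_{m,n}$ at two shifted integer arguments, and the closed form carries a factor $(-\tfrac{z-\epsilon}{2}+\tfrac12)_{n+1}$ that annihilates every relevant $z$ simultaneously; no index-by-index matching is required. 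This is not only tidier but reusable: the same Mellin formula is what yields $\langle r^n_0,u^0_{n,2m}\rangle$ and $\langle u^0_{n,2m},u^0_{n,2m_1}\rangle$ in Lemma~\ref{ru} and hence Corollary~\ref{corrn0un0}. Your route would go through, but the Mellin transform is precisely the shortcut that replaces the bookkeeping you anticipate with a single computation.
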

Some preliminary results will be needed before proving this Theorem. We begin with,
\begin{lemma}\label{fefo}
The function $f^{\epsilon}_{m,n}$ has the representation
$$
 f^{\epsilon}_{m,n}(t)=f^{\epsilon}_{1,m,n}(t)+f^{\epsilon}_{2,m,n}(t),
$$ 
where
\begin{align}\label{f1e}
&f^{\epsilon}_{1,m,n}(t)\nonumber\\&=t^{\epsilon}\frac{(\frac{1}{2}+\epsilon)_{n+1}(-m+\frac{1}{2})_n}{(m+1+\epsilon)_{n+1} n!}\hypergeom32{-n,-m-\frac{1}{2},n+\frac{3}{2}+\epsilon}{\frac{1}{2}+\epsilon,-m+\frac{1}{2}}{t^2}
\end{align}
and
\begin{align}\label{f2e}
f^{\epsilon}_{2,m,n}(t)&=-t^{\epsilon}\frac{(\frac{1}{2}+\epsilon)_n(-m-\frac{1}{2})_{n+1}}{(m+1+\epsilon)_{n+1}(n+1)!}\hypergeom21{-n,n+\frac{3}{2}+\epsilon}{\frac{1}{2}+\epsilon}{t^2}\nonumber\\&=\frac{(-1)^{n+1}(-m-\frac{1}{2})_{n+1}}{(m+1+\epsilon)_{n+1}}\frac{p_{2n+\epsilon}^{\frac{3}{2}}(t)}{p_{2n+\epsilon}^{\frac{3}{2}}(1)}.
\end{align}
\end{lemma}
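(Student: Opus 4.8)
The plan is to exploit two near-degeneracies hidden in the ${}_4F_3$ appearing in \eqref{fe}. Writing $a=-n$, $b=-n^2-(\tfrac32+\epsilon)n-m-1-\epsilon$, $c=-m-\tfrac12$, $d=n+\tfrac32+\epsilon$, $e=\tfrac12+\epsilon$, $f=-m+\tfrac12$, one observes that the second numerator parameter equals $b+1$ while $b$ itself is a denominator parameter, and that the third numerator parameter $c$ satisfies $c+1=f$. Thus the ${}_4F_3$ in \eqref{fe} is exactly $\hypergeom43{a,b+1,c,d}{e,b,f}{t^2}$, to which the contiguous relation \eqref{rec1} (with $g=f$) applies directly; all series below terminate because $a=-n$, so the manipulations are legitimate as finite sums.

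First I would apply \eqref{rec1} to obtain
$$b\,\hypergeom43{a,b+1,c,d}{e,b,f}{t^2}=c\,\hypergeom43{a,b,c+1,d}{e,b,f}{t^2}+(b-c)\,\hypergeom43{a,b,c,d}{e,b,f}{t^2}.$$
In the last term the numerator $b$ cancels the denominator $b$, leaving $\hypergeom32{a,c,d}{e,f}{t^2}$, which is the ${}_3F_2$ of $f^{\epsilon}_{1,m,n}$. In the first term on the right, cancelling $b$ against $b$ leaves $\hypergeom32{a,c+1,d}{e,f}{t^2}$, and since $c+1=f$ this collapses further to $\hypergeom21{a,d}{e}{t^2}$, the ${}_2F_1$ of $f^{\epsilon}_{2,m,n}$. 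Dividing by $b$ then expresses $f^{\epsilon}_{m,n}(t)$ as $t^{\epsilon}$ times the constant of \eqref{fe} times $\tfrac cb\hypergeom21{a,d}{e}{t^2}+\tfrac{b-c}{b}\hypergeom32{a,c,d}{e,f}{t^2}$, which already has the claimed shape $f^{\epsilon}_{1,m,n}+f^{\epsilon}_{2,m,n}$.

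The remaining work is to match the scalar prefactors, and this is the only place where care is required. The key arithmetic facts are: the quadratic $n^2+(\tfrac32+\epsilon)n+m+1+\epsilon$ occurring in the constant of \eqref{fe} is precisely $-b$, so the factor $1/b$ from \eqref{rec1} removes it up to sign; the factorization $b-c=-(n+1)(n+\tfrac12+\epsilon)$, combined with $(\tfrac12+\epsilon)_n(n+\tfrac12+\epsilon)=(\tfrac12+\epsilon)_{n+1}$ and $(n+1)/(n+1)!=1/n!$, turns the $(b-c)/b$ branch into exactly the constant of \eqref{f1e}; and $(-m-\tfrac12)(-m+\tfrac12)_n=(-m-\tfrac12)_{n+1}$ turns the $c/b$ branch into the constant of \eqref{f2e}. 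Finally, the second equality in \eqref{f2e} follows by rewriting $t^{\epsilon}\hypergeom21{-n,n+\tfrac32+\epsilon}{\tfrac12+\epsilon}{t^2}$ through the alternate ultraspherical representation \eqref{altdefe} with $\lambda=\tfrac32$ and evaluating $p^{\frac32}_{2n+\epsilon}(1)$ by the Chu-Vandermonde formula. The main obstacle is thus purely bookkeeping — correctly tracking these Pochhammer shifts and the sign of $b$; the conceptual step, namely recognizing that \eqref{rec1} applied to the doubly degenerate ${}_4F_3$ yields a ${}_3F_2$ plus a ${}_2F_1$, is immediate once the parameters are labelled.
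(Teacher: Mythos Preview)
Your proposal is correct and follows exactly the paper's own route: apply the contiguous relation \eqref{rec1} with $a=-n$, $b=f=-n^2-(\tfrac32+\epsilon)n-m-1-\epsilon$, $c=-m-\tfrac12$, $d=n+\tfrac32+\epsilon$, $e=\tfrac12+\epsilon$, $g=c+1=-m+\tfrac12$, so that the two right-hand terms collapse to a ${}_3F_2$ and a ${}_2F_1$ respectively. Your bookkeeping of the prefactors (via $b-c=-(n+1)(n+\tfrac12+\epsilon)$ and $(-m-\tfrac12)(-m+\tfrac12)_n=(-m-\tfrac12)_{n+1}$) and the final identification with $p^{3/2}_{2n+\epsilon}$ through \eqref{altdefe} and Chu--Vandermonde are accurate.
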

\begin{proof}
Use  equation~\eqref{rec1} with
$$
a=-n, \ b=f=-n^2-(\frac{3}{2}+\epsilon)n-m-\epsilon-1,\ g-1=c=-m-\frac{1}{2},\ d=n+\frac{3}{2}+\epsilon, \ e=\frac{1}{2}+\epsilon.
$$ 
\end{proof}
The Mellin transform of the above function on $[0,1]$ is given by
\begin{lemma}\label{mt} For real $z>0$,
\begin{align}\label{mtfe}
\check f^{\epsilon}_{m,n}(z)&=\int_0^1 t^{z-1} f^{\epsilon}_{m,n}(t)dt\nonumber\\&=\frac{1}{z+2m+1+\epsilon}+(-1)^{n+1}\frac{(-m-\frac{1}{2})_{n+1}}{(m+1+\epsilon)_{n+1}(n+1)(2n+1+2\epsilon)}\nonumber\\&
+(-1)^n\frac{(\frac{z+\epsilon}{2}+m+\frac{1}{2}+(n+1)(n+\frac{1}{2}+\epsilon))}{(\frac{z+\epsilon}{2}+m+\frac{1}{2})(n+1)(2n+1+2\epsilon)}\frac{(-m-\frac{1}{2})_{n+1}(-\frac{z-\epsilon}{2}+\frac{1}{2})_{n+1}}{(m+1+\epsilon)_{n+1}(-n-\frac{z+\epsilon}{2})_{n+1}}.
\end{align}
\end{lemma}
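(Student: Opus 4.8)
The plan is to compute the Mellin transform termwise, using the decomposition $f^{\epsilon}_{m,n}=f^{\epsilon}_{1,m,n}+f^{\epsilon}_{2,m,n}$ supplied by Lemma~\ref{fefo}, since each summand is (up to a monomial factor $t^{\epsilon}$) a hypergeometric function in $t^2$ with finitely many terms. First I would record the elementary integral $\int_0^1 t^{z-1+\epsilon}\,t^{2k}\,dt = 1/(z+\epsilon+2k)$, so that for any terminating series $\sum_k c_k t^{2k}$ one gets $\int_0^1 t^{z-1}t^{\epsilon}\sum_k c_k t^{2k}\,dt=\sum_k c_k/(z+\epsilon+2k)$. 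Writing $1/(z+\epsilon+2k)=\tfrac12\cdot 1/(k+\tfrac{z+\epsilon}{2})$ and using $1/(k+a)=(1)_k/(a)(a+1)_k=\frac{1}{a}\frac{(a)_k}{(a+1)_k}$, each Mellin transform becomes again a hypergeometric series of the same type with one extra upper and one extra lower parameter, namely $a=\frac{z+\epsilon}{2}$ moved into the numerator and $a+1$ into the denominator, evaluated at $z=1$.

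Concretely, for the $f^{\epsilon}_2$ piece the series is a ${}_2F_1(-n,n+\tfrac32+\epsilon;\tfrac12+\epsilon;t^2)$, so its Mellin transform is a ${}_3F_2$ at $1$ of Saalschützian type (one checks the parameter sum), and the Pfaff--Saalsch\"utz formula evaluates it in closed form; this produces the second line of \eqref{mtfe} (the $(-m-\tfrac12)_{n+1}$-term) together with part of the third line. For the $f^{\epsilon}_1$ piece the series is a ${}_3F_2(-n,-m-\tfrac12,n+\tfrac32+\epsilon;\tfrac12+\epsilon,-m+\tfrac12;t^2)$; its Mellin transform is a ${}_4F_3$ at $1$ with an extra pair of parameters $\tfrac{z+\epsilon}{2}$ over $\tfrac{z+\epsilon}{2}+1$. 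Here one of the upper parameters, $-m-\tfrac12$, together with a lower one, $-m+\tfrac12$, differ by an integer, so the ${}_4F_3$ is of the special form handled in Lemma~\ref{hyp1} (equation~\eqref{hyp2} or \eqref{hyp3}, after the index shift $n\mapsto n+1$, $a\mapsto -n$, $b\mapsto$ the relevant parameter, with $-n+a=d$ of that lemma matching $-m-\tfrac12$ vs $-m+\tfrac12$). Applying that lemma evaluates the ${}_4F_3$ and yields the remaining explicit term, after which one isolates the pole at $z=-2m-1-\epsilon$, which is exactly the $1/(z+2m+1+\epsilon)$ summand coming from the $-|t|t^{2m+\epsilon}=-t^{2m+\epsilon}$ part of $u^0$ on $[0,1]$; I should double-check that the monomial $-|t|t^{2m+\epsilon}$ contributes $\int_0^1 t^{z-1}(-t^{2m+\epsilon})dt=-1/(z+2m+1+\epsilon)$ and reconcile signs, since the statement is for $f^{\epsilon}_{m,n}$ alone, not $u^0$.

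The main obstacle I anticipate is bookkeeping: correctly matching the shifted indices of Lemma~\ref{hyp1} to the ${}_4F_3$ that arises, tracking which of the cases \eqref{hyp2}/\eqref{hyp3} applies for each parity $\epsilon$, and then simplifying the resulting product of Pochhammer symbols into the compact form of the third line of \eqref{mtfe}, in particular recognizing the factor $\frac{z+\epsilon}{2}+m+\frac12+(n+1)(n+\frac12+\epsilon)$ in the numerator and $(-\tfrac{z-\epsilon}{2}+\tfrac12)_{n+1}$, $(-n-\tfrac{z+\epsilon}{2})_{n+1}$ in the quotient. A useful sanity check along the way is to verify the residue of $\hat f^{\epsilon}_{m,n}(z)$ at $z=-2m-1-\epsilon$ equals $1$ (forcing the other two terms to cancel there, which pins down several constants), and to verify the formula at a convenient integer value of $z$, say $z=1$, against a direct computation of $\int_0^1 f^{\epsilon}_{m,n}$.
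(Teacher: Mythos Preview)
Your overall strategy---decompose via Lemma~\ref{fefo}, integrate termwise to turn each piece into a terminating hypergeometric sum at $1$, then evaluate---is exactly what the paper does. However, the two specific evaluation tools you name would not work as stated, and this is where the actual content of the proof lies.

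First, the ${}_3F_2$ coming from $f^{\epsilon}_{2,m,n}$, namely
\[
{}_3F_2\!\left(\begin{matrix}-n,\ n+\tfrac32+\epsilon,\ \tfrac{z+\epsilon}{2}\\[1mm] \tfrac12+\epsilon,\ \tfrac{z+\epsilon}{2}+1\end{matrix};1\right),
\]
is \emph{not} Saalsch\"utzian: the parameter sums miss the balance condition by~$1$. What makes it tractable is instead the pair $\bigl(\tfrac{z+\epsilon}{2},\,\tfrac{z+\epsilon}{2}+1\bigr)$ differing by~$1$; this is precisely the structure of Lemma~\ref{hyp1}, equation~\eqref{hyp2} (equivalently, one application of the contiguous relation~\eqref{rec3} followed by Chu--Vandermonde and Pfaff--Saalsch\"utz). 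The paper uses~\eqref{rec3} directly here.

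Second, Lemma~\ref{hyp1} evaluates only ${}_3F_2$'s, so it cannot be applied directly to the ${}_4F_3$ arising from $f^{\epsilon}_{1,m,n}$. The crucial observation you are missing is that this ${}_4F_3$ has \emph{two} upper/lower pairs differing by~$1$, namely $(-m-\tfrac12,\,-m+\tfrac12)$ and $(\tfrac{z+\epsilon}{2},\,\tfrac{z+\epsilon}{2}+1)$. The paper exploits this by applying the contiguous relation~\eqref{rec2} (with $a=-m-\tfrac32$, $b=\tfrac{z+\epsilon}{2}-1$), which splits the ${}_4F_3$ into a difference of two ${}_4F_3$'s in each of which one such pair cancels, leaving two genuine ${}_3F_2$'s of the Lemma~\ref{hyp1} shape. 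Only after this reduction do Chu--Vandermonde and Pfaff--Saalsch\"utz finish the job.

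Finally, the term $1/(z+2m+1+\epsilon)$ has nothing to do with the monomial $-|t|t^{2m+\epsilon}$ in $u^0$; the lemma concerns $f^{\epsilon}_{m,n}$ alone, and that simple pole emerges from the Pfaff--Saalsch\"utz evaluation of the $m$-dependent ${}_3F_2$ after the reduction above.
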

\begin{proof}
Equations~\eqref{f1e} and \eqref{f2e} yield
\begin{align*}
\check f^{\epsilon}_{m,n}(z)&=\frac{(\frac{1}{2}+\epsilon)_n(n+\frac{1}{2}+\epsilon)(-m+\frac{1}{2})_n}{(m+1+\epsilon)_{n+1} n!(z+\epsilon)}\hypergeom43{-n,-m-\frac{1}{2}, n+\frac{3}{2}+\epsilon, \frac{z+\epsilon}{2}}{\frac{1}{2}+\epsilon,-m+\frac{1}{2},\frac{z+\epsilon}{2}+1}{1}\nonumber\\&-\frac{(\frac{1}{2}+\epsilon)_n(-m-\frac{1}{2})_{n+1}}{(m+1+\epsilon)_{n+1}(n+1)!(z+\epsilon)}\hypergeom32{-n,n+\frac{3}{2}+\epsilon,\frac{z+\epsilon}{2}}{\frac{1}{2}+\epsilon,\frac{z+\epsilon}{2}+1}{1}.
\end{align*}
With the use of the formulas~\eqref{rec2} with $a=-m-\frac{3}{2}$ and $b=\frac{z+\epsilon}{2}-1$ and \eqref{rec3} with $a+1=\frac{z+\epsilon}{2}$ yields
\begin{align*}
\check f^{\epsilon}_{m,n}(z)&=-\frac{(-\frac{1}{2}+\epsilon)_{n+1}(-m-\frac{1}{2})_{n+1}}{2(m+1+\epsilon)_{n+1} (n+1)!(\frac{z+\epsilon}{2}+m+\frac{1}{2})}\\&\left(\hypergeom32{-n-1,n+\frac{1}{2}+\epsilon,\frac{z+\epsilon}{2}-1}{-\frac{1}{2}+\epsilon,\frac{z+\epsilon}{2}}{1}-\hypergeom32{-n-1,n+\frac{1}{2}+\epsilon,-m-\frac{3}{2}}{-\frac{1}{2}+\epsilon,-m-\frac{1}{2}}{1}\right)\nonumber\\&+\frac{(-\frac{1}{2}+\epsilon)_{n+1}(-m-\frac{1}{2})_{n+1}}{2(m+1+\epsilon)_{n+1}(n+1)!(n+1)(n+\frac{1}{2}+\epsilon)}\\&\left(\hypergeom21{-n-1,n+\frac{1}{2}+\epsilon}{-\frac{1}{2}+\epsilon}{1}-\hypergeom32{-n-1,n+\frac{1}{2}+\epsilon,\frac{z+\epsilon}{2}-1}{-\frac{1}{2}+\epsilon,\frac{z+\epsilon}{2}}{1}\right).
\end{align*}
Combining like terms then using the Chu-Vandermonde formula and the Pfaff-Saalsch\"utz formula yields the result after simplification.
\end{proof}
\begin{proof}[Proof of Theorem~\ref{rep1}]
Note that from its definition $u_{2n+1+\epsilon,2n-2m}+|t|t^{2m+\epsilon}$ is a polynomial of degree at most $2n+1+\epsilon$ on $[-1,1]$. This polynomial is characterized by the $2n+\epsilon$ orthogonality equations
$$
\int_{-1}^1 u_{2n+1+\epsilon,2n-2m}(t)t^i(1-t^2)dt=0, \ i=0,\ldots, 2n-1+\epsilon
$$
and the evaluations $u_{2n+1+\epsilon,2n-2m}(\pm1)=0$.
Set $u_{\epsilon,m,n}(t)= -|t|t^{2m+\epsilon}+f^\epsilon_{m,n}(t)$ so that
\begin{align*}
I_{n,i}&=\int_{-1}^1 u_{\epsilon,m,n}(t)t^i(1-t^2)dt=-\int_{-1}^1 |t|t^{2m+\epsilon}t^i(1-t^2)dt+\int_{-1}^1 f^{\epsilon}_{m,n}(t)t^i(1-t^2)dt\\&=
-(1+(-1)^{i+\epsilon})\left(\frac{1}{2m+i+2+\epsilon}-\frac{1}{2m+i+4+\epsilon}\right)\\&+(1+(-1)^{i+\epsilon})\int_0^1f^{\epsilon}_{m,n}(t)t^i(1-t^2)dt.
\end{align*}
We only need to consider the case when $i$ and $\epsilon$ have the same parity. With the use of the Mellin transform  with $z=i+1$ and $z=i+3$ we find only the difference in the third term
survives which has the vanishing factor  $(-\frac{z-\epsilon}{2}+\frac{1}{2})_{n+1}$. Thus $I_{n,i}=0,\ i=0,\ldots, 2n-1$. 
Since $u_{\epsilon,m,n}$ is either symmetric or antisymmetric in $t$ we need only show that $u_{\epsilon,m,n}(1)=0$ or 
\begin{equation}\label{umnone}
1=f^{\epsilon}_{1,m,n}(1)+f^{\epsilon}_{2,m,n}(1).
\end{equation}
Lemma~\ref{fefo} gives
$$
f^{\epsilon}_{2,m,n}(1)=\frac{(-1)^{n+1}(-m-1/2)_{n+1}}{(m+1+\epsilon)_{n+1}}
$$
and 
$$
f^{\epsilon}_{1,m,n}(1)=\frac{(1/2+\epsilon)_{n+1}(-m+1/2)_n}{(m+1+\epsilon)_{n+1} n!}\hypergeom32{-n,-m-1/2,n+3/2+\epsilon}{1/2+\epsilon,-m+1/2}{1}.
$$
Lemma~\ref{hyp1}  shows that
\begin{equation*}
\hypergeom32{-n,-m-1/2,n+3/2+\epsilon}{1/2+\epsilon,-m+1/2}{1}=\frac{(-1)^nn!(-m-\frac{1}{2})}{(\frac{1}{2}+\epsilon)_{n+1}}\bigg(1+(-1)^n\frac{(m+1+\epsilon)_{n+1}}{(-m-\frac{1}{2})_{n+1}}\bigg).
\end{equation*} 
Thus
$$
f^{\epsilon}_{1,m,n}(1)=-\frac{(-1)^{n+1}(-m-\frac{1}{2})_{n+1}}{(m+1+\epsilon)_{n+1}}+1,
$$
so that  $f^{\epsilon}_{1,m,n}(1)+f^{\epsilon}_{2,m,n}(1)=1$ and equation~ \eqref{umnone} is satisfied. The theorem now follows since $\phi_{2n+1+\epsilon}$ and $(1-|t|t^{2m})t^{\epsilon}$ have opposite parities on $[-1,1]$.
\end{proof}
With the above formulas we evaluate some integrals that will be used below  (see \cite{dgha} equation 5.4).
\begin{lemma}\label{ru}
With $\epsilon\in\{0,1\}$
\begin{align}\label{rue}
\langle r^{2n+1+\epsilon}_0, u_{2n+1+\epsilon,2n-2m}\rangle&=\int_{-1}^1 r^{2n+1+\epsilon}_0(t) u_{2n+1+\epsilon,2n-2m}(t)dt\nonumber\\&=(-1)^{n+1}\frac{(-m-\frac{1}{2})_{n+1}}{(m+1+\epsilon)_{n+1}(n+\frac{1}{2}+\epsilon)(n+1)},
\end{align}
and
\begin{align}\label{uuev}
&\langle u_{2n+1+\epsilon,2n-2m}, u_{2n+1+\epsilon,2n-2m_1}\rangle=\int_{-1}^1 u_{2n+1+\epsilon,2n-2m}(t) u_{2n+1+\epsilon,2n-2m_1}(t)dt\nonumber\\&=
\frac{(-m_1-\frac{1}{2})_{n+1}(-m-\frac{1}{2})_{n+1}(m_1+m+\frac{3}{2}+\epsilon+(n+1)(n+\frac{1}{2}+\epsilon))}{(m_1+1+\epsilon)_{n+1}(m+1+\epsilon)_{n+1}(m_1+m+\frac{3}{2}+\epsilon)(n+1)(n+\frac{1}{2}+\epsilon)}.
\end{align}
\end{lemma}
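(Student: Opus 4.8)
The plan is to compute both inner products by reducing them to Mellin-transform evaluations of $f^\epsilon_{m,n}$, exactly as in the proof of Theorem~\ref{rep1}, and then invoke Lemma~\ref{mt}. For the first identity \eqref{rue}, I would use the representation $r^{2n+1+\epsilon}_0 = \frac{2(2n+1)!!}{(n+2)!}\hat r^{2n+1+\epsilon}_0$ from Lemma~\ref{rhatr}, but it is cleaner to use orthogonality: since $u^0_{2n+1+\epsilon,2n-2m} = (I-P^{n,0})\bar u_m$ differs from $-|t|t^{2m+\epsilon}$ by a polynomial that is orthogonal (against the weight $1-t^2$) to all polynomials of degree $\le 2n-1+\epsilon$, and $r^{2n+1+\epsilon}_0$ lies in $C_0^{n,0}(\tfrac{\cdot+1}{2})$ which by Theorem~\ref{ramp} is orthogonal to $A_0^{n,0}(\tfrac{\cdot+1}{2})$, the pairing collapses. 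Concretely, write $r^{2n+1+\epsilon}_0(t) = (1-t)(1+t)^1 p^{1,2}_{2n+\epsilon-1}(t)$ up to the scalar from \eqref{rhr}, and expand $u^0_{2n+1+\epsilon,2n-2m}(t) = -|t|t^{2m+\epsilon} + f^\epsilon_{m,n}(t)$ via Theorem~\ref{rep1}. The polynomial part $f^\epsilon_{m,n}$ pairs to zero against $r^{2n+1+\epsilon}_0$ by degree/orthogonality, leaving only $-\int_{-1}^1 |t|t^{2m+\epsilon} r^{2n+1+\epsilon}_0(t)\,dt$; splitting this into $\int_0^1$ and using $r^{2n+1+\epsilon}_0(-t) = l^{2n+1+\epsilon}_0(t)$ together with the parity of $|t|t^{2m+\epsilon}$, this is a finite sum of Beta integrals that resums by Chu--Vandermonde. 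Alternatively, and more symmetrically, one writes $\langle r^{2n+1+\epsilon}_0, u^0\rangle = \langle r^{2n+1+\epsilon}_0, -|t|t^{2m+\epsilon}\rangle$ and uses $r^{2n+1+\epsilon}_0 = k_n \hat r^{2n+1+\epsilon}_0$ plus the Mellin transform of $\hat r$, which is again a ratio of Pochhammer symbols.

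For the second identity \eqref{uuev}, I would expand both factors by Theorem~\ref{rep1}:
\begin{align*}
\langle u^0_{2n+1+\epsilon,2n-2m}, u^0_{2n+1+\epsilon,2n-2m_1}\rangle
&= \int_{-1}^1 \big(-|t|t^{2m+\epsilon}+f^\epsilon_{m,n}\big)\big(-|t|t^{2m_1+\epsilon}+f^\epsilon_{m_1,n}\big)\,dt\\
&= \int_{-1}^1 |t|^2 t^{2m+2m_1+2\epsilon}\,dt
- \int_{-1}^1 |t|t^{2m+\epsilon} f^\epsilon_{m_1,n}\,dt\\
&\quad - \int_{-1}^1 |t|t^{2m_1+\epsilon} f^\epsilon_{m,n}\,dt
+ \int_{-1}^1 f^\epsilon_{m,n} f^\epsilon_{m_1,n}\,dt.
\end{align*}
The first term is elementary. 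The two cross terms are, by parity, $2\int_0^1 t^{2m+\epsilon+1} f^\epsilon_{m_1,n}(t)\,dt = 2\hat f^\epsilon_{m_1,n}(2m+\epsilon+2)$ and symmetrically $2\hat f^\epsilon_{m,n}(2m_1+\epsilon+2)$, both supplied by Lemma~\ref{mt}. For the last term I would again use one of the two identities $u^0_{2n+1+\epsilon,2n-2m}(\pm1)=0$, i.e. $f^\epsilon_{m,n}(t) = |t|t^{2m+\epsilon} + u^0_{\dots}$ with $u^0$ the actual projected function, so that $\int f^\epsilon_{m,n} f^\epsilon_{m_1,n} = \int f^\epsilon_{m,n}(t)\big(|t|t^{2m_1+\epsilon} + u^0_{m_1}\big)\,dt$; but since $u^0_{m_1} \in A_0^{n,0}{}^\perp$-type orthogonality does not directly apply to $f^\epsilon_{m,n}$, it is more robust simply to integrate $\int_0^1 f^\epsilon_{m,n}(t) f^\epsilon_{m_1,n}(t)\,dt$ term-by-term using the hypergeometric series \eqref{f1e}--\eqref{f2e} for one factor and the monomial expansion of the other, producing a ${}_4F_3$ (or sum of ${}_3F_2$'s) at argument $1$ whose parameters are of Saalschützian type; Lemma~\ref{hyp1} and the Pfaff--Saalschütz formula then evaluate it in closed form. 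Collecting the four contributions and simplifying the resulting ratio of Pochhammer symbols gives the stated answer.

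The main obstacle is the fourth term $\int_{-1}^1 f^\epsilon_{m,n} f^\epsilon_{m_1,n}\,dt$: it is the only one not immediately a single Mellin evaluation, and the bookkeeping of the two interlocking ${}_3F_2$ pieces $f^\epsilon_{1,m,n}, f^\epsilon_{2,m,n}$ against $f^\epsilon_{1,m_1,n}, f^\epsilon_{2,m_1,n}$ threatens a four-fold split. I expect the cleanest route is to fix one factor, say $f^\epsilon_{m_1,n}$, use that $f^\epsilon_{2,m_1,n}$ is (a multiple of) the ultraspherical polynomial $p^{3/2}_{2n+\epsilon}$ so that it is orthogonal, against $(1-t^2)^0$, to the lower-degree pieces — reducing the genuinely new computation to one ${}_3F_2(1)$ — and then to handle that single sum with the Pfaff--Saalschütz formula via \eqref{rec3} as in Lemma~\ref{hyp1}. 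A useful consistency check at every stage is to set $m=m_1$ and compare with $\|u^0_{2n+1+\epsilon,2n-2m}\|^2$, and to check the $\epsilon\mapsto\epsilon$ and $m\leftrightarrow m_1$ symmetries of the final expression, both of which the stated formula manifestly respects.
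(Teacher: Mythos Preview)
Your proposal takes a harder route than necessary and contains one dubious step. The paper's proof exploits a simple self-adjointness observation that you overlook: since $P^{n,0}$ is an orthogonal projection and both $r^{2n+1+\epsilon}_0=(I-P^{n,0})(1+t)$ and $u^0_{2n+1+\epsilon,2n-2m}=(I-P^{n,0})\bar u_{2m+1+\epsilon}$ lie in the range of $I-P^{n,0}$, one may replace either factor by its \emph{unprojected} version. Thus for \eqref{rue} the paper writes
\[
\langle r^{2n+1+\epsilon}_0,u^0\rangle=\langle 1+t,\,u^0\rangle
=-\tfrac{1}{m+1+\epsilon}+\int_{-1}^1(1+t)f^\epsilon_{m,n}(t)\,dt,
\]
and the Mellin formula \eqref{mtfe} at $z=1$ and $z=2$ finishes it. For \eqref{uuev} the same trick replaces one $u^0$ by $\bar u_{2m+1+\epsilon}=t^\epsilon-|t|t^{2m+\epsilon}$, giving
\[
\langle u^0_{m},u^0_{m_1}\rangle=\int_{-1}^1\bigl(t^\epsilon-|t|t^{2m+\epsilon}\bigr)\bigl(-|t|t^{2m_1+\epsilon}+f^\epsilon_{m_1,n}(t)\bigr)\,dt,
\]
which is evaluated by \eqref{mtfe} at $z=1+\epsilon$ and $z=2m+2+\epsilon$. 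The term $\int f^\epsilon_{m,n}f^\epsilon_{m_1,n}$ that you identify as the main obstacle never appears.

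Your approach to \eqref{rue} also asserts that $f^\epsilon_{m,n}$ pairs to zero against $r^{2n+1+\epsilon}_0$ ``by degree/orthogonality''; this is not correct as stated. From $u^0=\bar u-P^{n,0}\bar u$ and $u^0=-|t|t^{2m+\epsilon}+f^\epsilon_{m,n}$ one sees $f^\epsilon_{m,n}=t^\epsilon-P^{n,0}\bar u$, so $\langle r^{2n+1+\epsilon}_0,f^\epsilon_{m,n}\rangle=\langle r^{2n+1+\epsilon}_0,t^\epsilon\rangle$, which does not vanish in general. Your alternative of integrating $-|t|t^{2m+\epsilon}$ against the explicit Jacobi form of $\hat r$ would eventually work, but it is substantially more laborious than the two Mellin evaluations the paper uses.
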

\begin{proof}
The use of equation~\eqref{rhr} and Theorem~\ref{rep1} gives
\begin{align*}
&\int_{-1}^1 r^{2n+1+\epsilon}_0(t) u_{2n+1+\epsilon,2n-2m}(t)dt=\int_{-1}^1 (1+t) u_{2n+1+\epsilon,2n-2m}(t)dt\\&=-\frac{1}{m+1+\epsilon}+\int_{-1}^1(1+t)  f^{\epsilon}_{m,n}(t)dt.
\end{align*}
Formula~\eqref{mtfe} with $z=1$  and $z=2$ now gives the result.
To obtain the second formula note that
\begin{align*}
&\langle u_{2n+1+\epsilon,2n-2m}, u_{2n+1+\epsilon,2n-2m_1}\rangle\\&=\int_{-1}^1(t^{\epsilon}-|t|t^{2m+\epsilon})(-|t|t^{2m_1+\epsilon}+f^{\epsilon}_{m_1,n}(t))dt\\&=-\frac{1}{m_1+1+\epsilon}+\frac{2}{2m_1+2m+3+2\epsilon}\\&+2\int_0^1(t^{\epsilon}- t^{2m+1+\epsilon})f^{\epsilon}_{m_1,n}(t)dt.
\end{align*}
The result now follows from formula~\eqref{mtfe} with $z=1+\epsilon$ and $z=2m+2+\epsilon$.
\end{proof}
\begin{corollary}\label{corrn0un0}
The above formulas yield for $n>2m$
\begin{equation}\label{rn0un0}
\langle r^n_0, u_{n,2m}\rangle=\frac{(-1)^m(n-1)!(n-2m)!(2m)!}{2^{n-2}(n+1)!(n-m)!m!}=(-1)^{n+1}\langle l^n_0,u_{n,2m}\rangle
\end{equation}
and for $n\ge2\text{max}\{m,m_1\}+1$
\begin{align}\label{umum1}
&\langle u_{n,2m},u_{n,2m_1}\rangle\nonumber\\&=\frac{(-1)^{m+m_1}(n-2m)!(n-2m_1)!(2m)!(2m_1)!(n-1)!(n^2+5n+2-4(m+m_1))}{2^{2n-1}m!m_1!(n-m)!(n-m_1)!(n+1)!(2n+1-2(m+m_1))}\nonumber\\&=\frac{\langle r^n_0,u_{n,2m}\rangle\langle r^n_0,u_{n,2m_1}\rangle(n+1)!(n^2+5n+2-4(m+m_1))}{8(n-1)!(2n+1-2(m+m_1))}.
\end{align}
\end{corollary}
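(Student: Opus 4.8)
The plan is to obtain both identities of the corollary by \emph{specializing} Lemma~\ref{ru}. Write $n=2\nu+1+\epsilon$ with $\epsilon\in\{0,1\}$ and $\nu=\floor{(n-1)/2}$. Comparing second subscripts, the function $u^0_{n,2m}$ in the corollary is exactly the function $u^0_{2\nu+1+\epsilon,\,2\nu-2(\nu-m)}$, i.e.\ it is the function appearing in Lemma~\ref{ru} with that lemma's ``$n$'' equal to $\nu$ and its ``$m$'' equal to $\nu-m$ (and likewise ``$m_1$'' equal to $\nu-m_1$ for the second formula). The standing hypotheses $n>2m$ and $n\ge2\max\{m,m_1\}+1$ are precisely what make $\nu-m\ge0$ and $\nu-m_1\ge0$, so the specialization is legitimate and $\bar u_{n-2m}$ is defined. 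Thus \eqref{rue} directly produces $\langle r^n_0,u^0_{n,2m}\rangle$ and \eqref{uuev} directly produces $\langle u^0_{n,2m},u^0_{n,2m_1}\rangle$; what remains is to rewrite these in the stated closed form.

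For \eqref{rn0un0} the only real step is converting the half-integer Pochhammer symbols to ordinary factorials. I split the $\nu+1$ ascending factors of $(-(\nu-m)-\tfrac12)_{\nu+1}=(m-\nu-\tfrac12)(m-\nu+\tfrac12)\cdots(m-\tfrac12)$ into its first $\nu-m+1$ (negative) factors, whose product is $(-1)^{\nu-m+1}(\tfrac12)_{\nu-m+1}=(-1)^{\nu-m+1}\dfrac{(2\nu-2m+2)!}{4^{\nu-m+1}(\nu-m+1)!}$, and its last $m$ (positive) factors $(\tfrac12)_m=\dfrac{(2m)!}{4^m m!}$; together with $((\nu-m)+1+\epsilon)_{\nu+1}=\dfrac{(2\nu-m+1+\epsilon)!}{(\nu-m+\epsilon)!}$ and the substitution $\nu=(n-1-\epsilon)/2$ this makes everything factorial. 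After a single half-integer factorial cancels, the two parities $\epsilon=0$ and $\epsilon=1$ collapse to one expression; the identities $(\nu+1)(\nu+\tfrac12+\epsilon)=\tfrac14 n(n+1)$, $\,2^3/2^{n+1}=2^{-(n-2)}$ and $n(n+1)=(n+1)!/(n-1)!$ then give exactly $\dfrac{(-1)^m(n-1)!(n-2m)!(2m)!}{2^{n-2}(n+1)!(n-m)!m!}$. The equality with $(-1)^{n+1}\langle l^n_0,u^0_{n,2m}\rangle$ is immediate: from Theorem~\ref{rep1}, $u^0_{n,2m}(t)=t^\epsilon h(t^2)$ for a suitable $h$, so $u^0_{n,2m}(-t)=(-1)^\epsilon u^0_{n,2m}(t)$ with $(-1)^\epsilon=(-1)^{n+1}$, and the change of variables $t\mapsto-t$ in $\langle l^n_0,u^0_{n,2m}\rangle=\int_{-1}^1 r^n_0(-t)u^0_{n,2m}(t)\,dt$ finishes it.

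For \eqref{umum1} I would not expand \eqref{uuev} from scratch but factor it against the product $\langle r^n_0,u^0_{n,2m}\rangle\langle r^n_0,u^0_{n,2m_1}\rangle$. Because the two $(-1)^{\nu+1}$ sign factors coming from \eqref{rue} multiply to $1$, this product equals the common Pochhammer ratio occurring in both \eqref{rue} and \eqref{uuev}, divided by $(\nu+\tfrac12+\epsilon)^2(\nu+1)^2$; dividing \eqref{uuev} (with the specialized parameters) by it leaves
\[
(\nu+1)(\nu+\tfrac12+\epsilon)\;\frac{(\nu-m)+(\nu-m_1)+\tfrac32+\epsilon+(\nu+1)(\nu+\tfrac12+\epsilon)}{(\nu-m)+(\nu-m_1)+\tfrac32+\epsilon}.
\]
Using $(\nu+1)(\nu+\tfrac12+\epsilon)=\tfrac14 n(n+1)$ and $(\nu-m)+(\nu-m_1)+\tfrac32+\epsilon=n+\tfrac12-m-m_1$, this simplifies to $\dfrac{(n+1)!\,(n^2+5n+2-4(m+m_1))}{8(n-1)!\,(2n+1-2(m+m_1))}$, which is the last line of \eqref{umum1}; substituting the closed form of \eqref{rn0un0} for each factor of $\langle r^n_0,u^0_{n,2m}\rangle\langle r^n_0,u^0_{n,2m_1}\rangle$ and using $2^{2(n-2)}\cdot8=2^{2n-1}$ yields the middle line.

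The main obstacle is bookkeeping rather than ideas: carrying the parity parameter $\epsilon$ through the Pochhammer-to-factorial conversions with the correct signs and powers of $2$ (and keeping track of which factorial is half-integer), and checking that the $\epsilon=0$ and $\epsilon=1$ computations genuinely merge into the single displayed formula valid for all $n$.
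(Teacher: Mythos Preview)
Your proposal is correct and follows essentially the same route as the paper: both specialize Lemma~\ref{ru} via the substitution (lemma's $n\mapsto\nu$, lemma's $m\mapsto\nu-m$), reduce the half-integer Pochhammer symbols to factorials exactly as the paper does, and obtain the $l^n_0$ relation from the parity $u^0_{n,2m}(-t)=(-1)^{n+1}u^0_{n,2m}(t)$. Your derivation of the last line of \eqref{umum1} first, by forming the ratio with $\langle r^n_0,u^0_{n,2m}\rangle\langle r^n_0,u^0_{n,2m_1}\rangle$, is a slightly cleaner organization of the same computation.
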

\begin{rem}
Formula~\eqref{umum1} corrects a typo in equation~(5.8) in \cite{dgha}.
\end{rem}
\begin{proof}
From  equation~\eqref{rue} we have with $m\to n-m$
$$
\langle r^{2n+1+\epsilon}_0,u_{2n+1+\epsilon,2m}\rangle=\frac{(-1)^{n+1}(-n+m-\frac{1}{2})_{n+1}}{(n-m+1+\epsilon)_{n+1}(n+\frac{1}{2}+\epsilon)(n+1)}.
$$
Since for $n>m$ and $\epsilon\in\{0,1\}$
$$
(n-m+1+\epsilon)_{n+1}(n+\frac{1}{2}+\epsilon)(n+1)=\frac{(2n-m+1+\epsilon)!(2n+2+\epsilon)!}{4(n-m+\epsilon)!(2n+\epsilon)!},
$$
and 
\begin{align*}
(-n+m-\frac{1}{2})_{n+1}&=(-n+m-\frac{1}{2})\cdots(-\frac{1}{2})(\frac{1}{2})\cdots(m-\frac{1}{2})\\&=(-1)^{n-m+1}\frac{(2n-2m+1)\cdots(1)(2m-1)\cdots(1)}{2^{n-m+1} 2^m}\\&=(-1)^{n-m+1}\frac{(2n-2m+1)!(2m)!}{2^{2n+1}(n-m)!m!},
\end{align*}
the first equation follows. The second equality in equation~\eqref{rn0un0} follows from the parity of $u_{n,2m}$ since 
\begin{align*}
\langle l^n_0,u_{n,2m}\rangle&=\langle (1-t),(I-P^n)\bar u_{n-2m}\rangle=(-1)^{n+1}\langle (1+t),(I-P^n)\bar u_{n-2m}\rangle\\&=(-1)^{n+1}\langle r^n_0,u_{n,2m}\rangle.
\end{align*}
To show the second formula let $m\to n-m$ and $m_1\to n-m_1$ in equation~\eqref{uuev} then use the  formulas just above.
\end{proof}

Let $w_n$ be given by equation~(5.2) in \cite{dgha} i.e.
\begin{equation}\label{symw}
w_n=\alpha(n) u_{n,0} + u_{n,2}
\end{equation}  
with
$$
\alpha(n)=2\frac{\frac{(n-2)(2n+1)}{n-1}-2\sqrt{3}\sqrt{\frac{(2n+1)(n+1)}{(2n-3)(n-1)}}}{n(2n-1)},
$$
and set 
\begin{equation}\label{prl}
q_{l_0^n}(-t)= q_{r_0^n}(t)=(I-P_{w_n})r_0^n(t)
\end{equation}
where $P_{w_n}$ is the orthogonal projection onto the space spanned by $w_n$.
Then we have:
\begin{theorem} The above functions have the following explicit formulas,
\begin{align*}
w_{2n+1}(t)=-(1+\alpha(2n+1)t^2)|t|t^{2n-2}+\alpha(2n+1)f^0_{n,n}(t)+f^0_{n-1,n}(t),
\end{align*}
while
\begin{align*}
w_{2n+2}(t)=-(1+\alpha(2n+2)t^2)|t|t^{2n-1}+\alpha(2n+2)f^1_{n,n}(t)+f^1
_{n-1,n}(t) 
\end{align*}
and
\begin{equation}\label{ortoramp}
q_{l_0^n}(-t)= q_{r_0^n}(t)=\frac{2(2n+1)!!}{(n+2)!}(1+t)p^{1,2}_{n-1}(t)-\frac{\langle w_n,r_0^n\rangle}{\langle w_n, w_n\rangle}w_n(t),\ -1\le t\le 1, 
\end{equation}
$$
\frac{\langle w_n,r_0^n\rangle}{\langle w_n, w_n\rangle}=2^{n-2}\frac{(n-1)\left((n+1)(2n-3)-\sqrt{3}\sqrt{(2n+1)(2n-3)(n-1)(n+1)}\right)}{(n+1)(n+2)}.
$$
\end{theorem}
\begin{proof}
The formula for $\frac{\langle w_n,r_0^n\rangle}{\langle w_n, w_n\rangle}$ above follows from  Corollary~\ref{corrn0un0}.
\end{proof}
Thus all the scaling functions in the $C^0$ MRA in Theorem 5.1 of \cite{dgha} have simple explicit representations in terms of hypergeometric functions with
\begin{equation}\label{phi0}
\tilde \phi_0=\begin{cases}q_{r_0^n}(2\cdot +1)\ -1\le t<0 \\ q_{l_0^n}(2\cdot -1)\ \quad\ 0\le t<1\\ 0 \ \quad\qquad\qquad
\text{elsewhere},\end{cases}
\end{equation}
\begin{equation}\label{phi1}
\tilde\phi_1=\begin{cases}w_n(2\cdot -1) \ 0\le t<1 \\  0 \ \qquad\qquad \text{elsewhere}, \end{cases}.
\end{equation}
and for $j=2,\ldots,n$ 
\begin{equation}\label{phij}
\tilde\phi_j=\begin{cases}\phi^0_j(2\cdot -1)\  0\le t<1 \\  0\qquad\qquad \text{ elsewhere}.\end{cases}
\end{equation}

Note that for $n=4$ we find that $\alpha(4)=0$ so that only $u_{4,2}$ is needed to build $w_4(t)$ and there is no square root. Thus all the coefficients in the scaling functions will be rational numbers. It turns out that this can be generalized which is accomplished by considering an extension of the $w_n$ discussed above. To this end set
\begin{equation}\label{genw}
w_{n, m, m_1}=\alpha_{m,m_1}(n) u_{n,2m}+u_{n,2m_1},
\end{equation}
then an orthogonal intertwining MRA can be constructed if there is a nonzero $w_{n,m,m_1}$ such that $\langle (I-P_{w_{n,m,m_1}})r^{n}_0, (I-P_{w_{n,m,m_1}})l^{n}_0\rangle=0$
where $P_{w_{n,m,m_1}}$ is the orthogonal projection onto the space spanned by $w_{n,m,m_1}$. This gives the equation
\begin{equation}\label{newint}
\langle r^{n}_0, l^{n}_0\rangle\langle w_{n,m,m_1},w_{n,m,m_1}\rangle=\langle r^{n}_0, w_{n,m,m_1}\rangle\langle w_{n,m,m_1}, l^{n}_0\rangle
\end{equation}
which can be written as
\begin{align}\label{rmra}
  0 &=
    \left|
      \begin{matrix}\langle u_{n,2m},u_{n,2m}\rangle & \langle r^{n}_0,u_{n,2m}\rangle \\
              \langle r^{n}_0,u_{n,2m}\rangle & |\langle r^{n}_0,l^{n}_0\rangle|\end{matrix}\right| \alpha_{m,m_1}(n)^2\nonumber\\&
    + 2\left|
      \begin{matrix}\langle u_{n,2m},u_{n,2m_1}\rangle & \langle r^{n}_0,u_{n,2m_1}\rangle \\
              \langle r^{n}_0,u_{n,2m}\rangle & |\langle r^{n}_0,l^{n}_0\rangle|\end{matrix}\right| \alpha_{m,m_1}(n) \nonumber\\&+\left|
      \begin{matrix}\langle u_{n,2m_1},u_{n,2m_1}\rangle & \langle r^{n}_0,u_{n,2m_1}\rangle \\
              \langle r^{n}_0,u_{n,2m_1}\rangle & |\langle r^{n}_0,l^{n}_0\rangle|\end{matrix}\right|,
 \end{align}
where  equation~\eqref{roro} and second equality in equation~\eqref{rn0un0} have been used.
 Corollary~\ref{corrn0un0} shows that
\begin{align}\label{u00}
&\langle u_{n,2m_1},u_{n,2m_1}\rangle  |\langle r^{n}_0,l^{n}_0\rangle| -\langle r^{n}_0,u_{n,2m_1}\rangle^2\nonumber\\&= \langle r^{n}_0,u_{n,2m_1}\rangle^2\left(\frac{n^2+5n+2-8m_1}{(2n+1-4m_1)(n+2)}-1\right)\nonumber\\&=-\langle r^{n}_0,u_{n,2m_1}\rangle^2\frac{n(n-4m_1)}{(n+2)(2n+1-4m_1)}.
\end{align}
This gives,
\begin{lemma}\label{al0}
The function $\alpha_{0,n}(4n)$ can be chosen to be equal to zero for $n\ge 1$ so that for $-1<t<1$
$$
w_{4n,2n}(t)=u_{4n,2n}=-|t|t^{2n-1}+f^1_{n-1,2n-1}(t).
$$
\end{lemma}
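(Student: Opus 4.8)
The plan is to show that the choice $\alpha_{0,n}(4n)=0$ already solves the defining equation~\eqref{rmra}, and then to read off the formula for the resulting generator from Theorem~\ref{rep1}.

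First observe that $\alpha=0$ is a root of the quadratic~\eqref{rmra} (with the MRA parameter set to some value $N$) exactly when its constant term vanishes, i.e.\ when
$$
\left|\begin{matrix}\langle u^0_{N,2m_1},u^0_{N,2m_1}\rangle & \langle r^{N}_0,u^0_{N,2m_1}\rangle\\ \langle r^{N}_0,u^0_{N,2m_1}\rangle & |\langle r^{N}_0,l^{N}_0\rangle|\end{matrix}\right|
=\langle u^0_{N,2m_1},u^0_{N,2m_1}\rangle\,|\langle r^{N}_0,l^{N}_0\rangle|-\langle r^{N}_0,u^0_{N,2m_1}\rangle^2=0 .
$$
This is precisely the left-hand side of~\eqref{u00}, which Corollary~\ref{corrn0un0} evaluates as $-\langle r^{N}_0,u^0_{N,2m_1}\rangle^2\,\frac{N(N-4m_1)}{(N+2)(2N+1-4m_1)}$; the factor $N-4m_1$ forces this to vanish whenever $N=4m_1$, the remaining factors being finite and the denominator $2N+1-4m_1=4m_1+1\neq 0$.

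Now take $m=0$, $m_1=n$ and $N=4n$. For $n\ge 1$ we have $N=4n\ge 2n+1=2\max\{m,m_1\}+1$, so Corollary~\ref{corrn0un0}, hence~\eqref{u00}, applies; since $N-4m_1=4n-4n=0$, the constant term of~\eqref{rmra} is zero, so $\alpha=0$ is a solution and $\alpha_{0,n}(4n)=0$. Substituting into~\eqref{genw} gives $w^0_{4n,2n}=w^0_{4n,0,n}=u^0_{4n,2n}$, which is not identically zero, so it is an admissible generator for the intertwining construction.

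Finally, to obtain the explicit representation, match indices in Theorem~\ref{rep1}: writing $4n=2\nu+1+\epsilon$ and $2n=2\nu-2\mu$ forces $\epsilon=1$, $\nu=2n-1$, $\mu=n-1$. Theorem~\ref{rep1} then gives, for $-1<t<1$,
$$
u^0_{4n,2n}(t)=u^0_{2\nu+1+\epsilon,\,2\nu-2\mu}(t)=-|t|t^{2\mu+\epsilon}+f^{\epsilon}_{\mu,\nu}(t)=-|t|t^{2n-1}+f^{1}_{n-1,2n-1}(t),
$$
as claimed. The substance of the argument is already packaged in~\eqref{u00}; the main thing to watch is the bookkeeping, in particular the clash between the lemma's $n$ and the MRA parameter $N=4n$ entering Corollary~\ref{corrn0un0}, and the index matching in the last display.
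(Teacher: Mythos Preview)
Your proof is correct and follows exactly the paper's (implicit) route: the lemma is stated immediately after \eqref{u00} with the words ``This gives,'' and your argument simply unpacks that---the constant term of \eqref{rmra} is the determinant in \eqref{u00}, which vanishes when $N=4m_1$, so with $m=0$, $m_1=n$, $N=4n$ one gets $\alpha_{0,n}(4n)=0$ and hence $w^0_{4n,2n}=u^0_{4n,2n}$, after which the index matching in Theorem~\ref{rep1} yields the displayed formula. Your bookkeeping (the constraint $N\ge 2\max\{m,m_1\}+1$ and the non-triviality of $u^0_{4n,2n}$) is a welcome extra check the paper leaves tacit.
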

Using  Corollary~\ref{corrn0un0}  to evaluate
$
\frac{\langle r_0^{4n} ,u_{4n,2n}\rangle}{\langle u_{4n,2n} u_{4n,2n}\rangle}, n\ge1
$
let
\begin{align*}
&q_{l_0^{4n}}(-t)=q_{r_0^{4n}}(t)=(I-P_{w_{4n,2n}})r_0^{4n}(t)\nonumber\\&=\frac{2(8n+1)!!}{(4n+2)!}(1+t)p^{1,2}_{4n-1}(t)-\frac{(-1)^n2^{4n}n!(3n)!}{(2n+1)!(2n)!}w_{4n,2n}(t).
\end{align*}
With  $j=2,\ldots,4n$ and $n\ge1$, set
$$
\tilde\phi_j=\begin{cases}\phi^0_j(2\cdot -1)\  0\le t<1 \\  0\qquad\qquad \text{ elsewhere},\end{cases}
$$
$$
\quad\tilde\phi_1=\begin{cases}w_{4n,2n}(2\cdot -1) \ 0\le t<1 \\  0 \ \qquad\qquad \text{elsewhere}, \end{cases}
$$
$$
\quad\tilde \phi_0=\begin{cases}q_{r_0^{4n}}(2\cdot +1)\ -1\le t<0 \\ q_{l_0^{4n}}(2\cdot -1)\ \quad\ 0\le t<1\\ 0 \ \quad\qquad\qquad
\text{elsewhere}.\end{cases}
$$

Take
$\Phi_{4n} = (\tilde\phi_0,\ldots ,\tilde\phi_{4n})^T$ with  $V_0(\Phi_{4n})$ and $V_p$ be given by equation~\eqref{multiphi}.
Then:
\begin{theorem}\label{tv1}
For $n\ge 1$,
$\Phi_{4n} $ generates an
orthogonal multiresolution analysis $\{V_p\}_{p=-\infty}^{\infty}$.
\end{theorem}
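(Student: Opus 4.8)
The plan is to prove Theorem~\ref{tv1} by repeating the intertwining multiresolution analysis construction of \cite{dgh} and \cite{dgha} that underlies Theorem~\ref{tv} (Theorem~5.1 in \cite{dgha}), the sole change being that the boundary generator $w^0_n$ is replaced by $w^0_{4n,2n}$. First I would isolate the abstract input to that construction: the proof that the integer translates of $\tilde\phi_0,\dots,\tilde\phi_{4n}$ form a Riesz basis of $V^{4n,0}_0=S^{4n}_0\cap L^2(\R)$ — so that $V_p(\Phi_{4n})$ from \eqref{multiphi} coincides with the spline MRA $V^{4n,0}_p$ and is in particular nested — uses the boundary function only through four properties: it is a continuous, compactly supported piecewise polynomial of degree at most $4n$ vanishing at $\pm1$; it is orthogonal to $A^{4n,0}_0(\frac{\cdot+1}{2})$; it has nonzero $L^2$ norm; and the straddling functions $q_{r_0^{4n}}$, $q_{l_0^{4n}}$ are well defined. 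Each of these holds for $w^0_{4n,2n}$: by \eqref{unm} it equals $(I-P^{4n,0})\bar u_{2n}$ with $\bar u_{2n}\in C^0(\R)$ a piecewise polynomial of degree $2n\le 4n$ vanishing at $\pm1$; since $\bar u_{2n}$ is not itself a polynomial, $w^0_{4n,2n}\neq 0$; and Corollary~\ref{corrn0un0} gives $\langle u^0_{4n,2n},u^0_{4n,2n}\rangle\neq 0$, so the coefficient in the displayed formula defining $q_{r_0^{4n}}$, $q_{l_0^{4n}}$ makes sense. (Note $n\ge1$ is exactly what makes the spline degree $4n$ meet the requirement $\ge3$ of Theorem~\ref{tv}.) Hence the $V_p(\Phi_{4n})$ form a multiresolution analysis, and only orthogonality remains.

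For orthogonality I would apply Theorem~\ref{theoremone}: the MRA generated by $\Phi_{4n}$ is orthogonal iff $C_0\perp C_1$, and, as noted in the discussion preceding \eqref{newint}, for the present generators this amounts to $\langle(I-P_{w^0_{4n,2n}})r^{4n}_0,(I-P_{w^0_{4n,2n}})l^{4n}_0\rangle=0$, that is, to \eqref{newint} with $w=w^0_{4n,2n}$. The decisive input is Lemma~\ref{al0}: it gives $\alpha_{0,n}(4n)=0$, hence $w^0_{4n,2n}=u^0_{4n,2n}$, so that the quadratic \eqref{rmra} collapses to the vanishing of its $\alpha$-free term,
\[
\langle u^0_{4n,2n},u^0_{4n,2n}\rangle\,\bigl|\langle r^{4n}_0,l^{4n}_0\rangle\bigr|-\langle r^{4n}_0,u^0_{4n,2n}\rangle^2=0 .
\]
This is precisely \eqref{u00} specialized to $n\mapsto 4n$ and $m_1\mapsto n$, whose right-hand side carries the factor $n-4m_1=4n-4n=0$. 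Thus the orthogonality condition holds for every $n\ge1$ and $\{V_p\}_{p=-\infty}^{\infty}$ is an orthogonal MRA.

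The step I expect to be the main obstacle is not the orthogonality identity, which by the above is a one-line consequence of Lemma~\ref{al0} and \eqref{u00}, but the verification that the Riesz-basis and nestedness portion of Theorem~5.1 in \cite{dgha} genuinely depends only on the four abstract properties of the boundary function listed above, and not on the explicit form of $w^0_n$. Once that is confirmed, substituting $w^0_{4n,2n}$ for $w^0_n$ throughout is routine, and the remaining computations are bookkeeping with the inner-product identities collected in Corollary~\ref{corrn0un0}.
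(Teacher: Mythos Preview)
Your proposal is correct and matches the paper's approach. The paper gives no separate proof of Theorem~\ref{tv1}; it is presented as an immediate consequence of Lemma~\ref{al0} together with the general intertwining-MRA machinery behind Theorem~\ref{tv} (Theorem~5.1 of \cite{dgha}), and your write-up makes exactly this argument explicit, including the verification via \eqref{u00} that the constant term of \eqref{rmra} vanishes when $n\mapsto 4n$, $m_1\mapsto n$.
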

We finish with another multiresolution analysis having generators with rational coefficients.  Equation~\eqref{rmra} implies that $\alpha_{m,m_1}(n)$ will be rational if and only if
$b^2_{m,m_1}-a_m a_{m_1}=t^2$ where $t$ is a rational number. Here 
$$
b_{m,m_1}= \langle u_{n,2m},u_{n,2m_1}\rangle  |\langle r^{n}_0,l^{n}_0\rangle| -\langle r^{n}_0,u_{n,2m}\rangle\langle r^{n}_0,u_{n,2m_1}\rangle,
$$
and
$$a_m= \langle u_{n,2m},u_{n,2m}\rangle  |\langle r^{n}_0,l^{n}_0\rangle| -\langle r^{n}_0,u_{n,2m}\rangle^2.
$$
With the use of Corollary~\ref{corrn0un0}  we find,
$$
\tilde b_{m,m_1}=\frac{b_{m,m_1}}{\langle r^n_0,u_{n,2m}\rangle \langle r^n_0,u_{n,2m_1}\rangle}=-\frac{n(n-2(m+ m_1))}{(n + 2)(2n+1 - 2(m + m_1))}
$$
and from equation~\eqref{u00} above
$$
\tilde a_m=\frac{a_m}{\langle r^n_0,u_{n,2m}\rangle^2}=-\frac{n(n-4m)}{(n+2)(2n+1-4m)}
$$
which gives
$$
\tilde b^2_{m,m_1}-\tilde a_m \tilde a_{m_1}=\frac{4(m-m_1)^2 n^2(n+1)(3n+1-4(m+m_1))}{(n+2)^2(2n+1-2(m+m_1))^2(2n+1-4m)(2n+1-4m_1)}.
$$
Thus we find that admissible solutions are among those that give \break $\frac{(n+1)(3n+1-4m-4m_1)}{(2n+1-4m)(2n+1-4m_1)}=t^2$ where $t$ is a rational number. The case when $t=0$ requires $3n+1-4(m+m_1)=0$ and a set of integers which also satisfies the constraints on $u_{n,2m}$ is $n\to 4n+1,\ m\to n+1, m_1\to 2n$.
In this case 
\begin{align*}
\alpha_{n+1,2n}(4n+1)&=-b_{n+1,2n}(4n+1)/a_{n+1}(4n+1)\\&=(-1)^n\frac{(4n-1)(4n)!(n+1)!(3n)!}{3(2n)!(2n+1)!(2n+2)!(2n-1)!}.
\end{align*}
Thus:
\begin{lemma}
For $n>1$,
\begin{align*}
w_{4n+1, n+1, 2n}&=\alpha_{n+1,2n}(4n+1) u_{4n+1,2(n+1)}+u_{4n+1,4n}\\&
=-\alpha_{n+1,2n}(4n+1)|t|t^{2n-2}-|t|+\alpha_{n+1,2n}(4n+1)f^0_{n-1,2n}+f^0_{0,2n}.
\end{align*}
\end{lemma}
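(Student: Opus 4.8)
The plan is to obtain both displayed equalities directly from facts already in hand, so the argument is essentially parameter bookkeeping. The first equality is nothing more than the definition \eqref{genw} of $w^0_{n,m,m_1}$ evaluated at $n\mapsto 4n+1$, $m\mapsto n+1$, $m_1\mapsto 2n$: equation~\eqref{genw} then reads $w^0_{4n+1,n+1,2n}=\alpha_{n+1,2n}(4n+1)\,u^0_{4n+1,2(n+1)}+u^0_{4n+1,2(2n)}$, and $2(2n)=4n$. The hypothesis $n>1$ is what guarantees that $m=n+1$ and $m_1=2n$ are distinct and that the index constraints on the two functions $u^0_{4n+1,2(n+1)}$ and $u^0_{4n+1,4n}$ (e.g.\ $2(n+1)\le 4n+1$) hold, so that $\alpha_{n+1,2n}(4n+1)$ --- the $t=0$ root of the quadratic \eqref{rmra}, whose closed form was displayed just before the statement --- is legitimately available.

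For the second equality I will apply Theorem~\ref{rep1} twice, in each case with $\epsilon=0$. This value of $\epsilon$ is forced: the first subscript $4n+1$ is odd and of the shape $2\nu+1$, so one must use the branch $u^0_{2\nu+1+\epsilon,\,2\nu-2\mu}$ with $2\nu+1=4n+1$, i.e.\ $\nu=2n$ and $\epsilon=0$. Writing $2(n+1)=2(2n)-2(n-1)$, Theorem~\ref{rep1} with internal parameters $\nu=2n$, $\mu=n-1$ gives $u^0_{4n+1,2(n+1)}(t)=-|t|\,t^{2n-2}+f^0_{n-1,2n}(t)$; writing $4n=2(2n)-2\cdot 0$, the same theorem with $\nu=2n$, $\mu=0$ gives $u^0_{4n+1,4n}(t)=-|t|+f^0_{0,2n}(t)$. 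Substituting these two expressions into the first line and grouping the two terms carrying the factor $\alpha_{n+1,2n}(4n+1)$ produces exactly the claimed formula for $w^0_{4n+1,n+1,2n}(t)$ on $(-1,1)$.

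There is essentially no genuine obstacle here; the only point needing care is checking that the internal indices fed into Theorem~\ref{rep1} are the correct ones --- that is, verifying the arithmetic identities $2(n+1)=2(2n)-2(n-1)$ and $4n=2(2n)-2\cdot 0$, confirming $\nu\ge\mu$ in both cases so the theorem applies, and confirming that $\epsilon=0$ is the only admissible choice. I would also note that the explicit value of $\alpha_{n+1,2n}(4n+1)$ itself requires no new computation in this proof, having already been extracted from Corollary~\ref{corrn0un0} and equation~\eqref{rmra} in the paragraph preceding the statement.
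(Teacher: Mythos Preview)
Your proposal is correct and matches the paper's approach: the paper states the lemma with only the word ``Thus'' as justification, treating both equalities as immediate from the definition \eqref{genw} and Theorem~\ref{rep1}, which is exactly what you do. Your parameter checks ($\epsilon=0$, $\nu=2n$, $\mu=n-1$ and $\mu=0$) are accurate.
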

With the use of  Corollary~\ref{corrn0un0}  we find
$$
\langle w_{4n+1, n+1, 2n},w_{4n+1, n+1, 2n}\rangle=2^{3 - 8n}\frac{((4n)!)^3(n - 1)^2(4n + 3)}{9((2n)!)^2((2n + 1)!)^2(4n + 2)!}
$$
and
$$
\langle r^{4n+1}_0,w_{4n+1, n+1, 2n}\rangle=-2^{3 - 4n}\frac{((4n)!)^2(n - 1)}{3(2n)!(2n + 1)!(4n + 2)!}.
$$
Let
\begin{align*}
&q_{l_0^{4n+1}}(-t)=q_{r_0^{4n+1}}(t)=(I-P_{w_{4n+1, n+1, 2n}})r^{4n+1}_0(t)\nonumber\\&=r^{4n+1}_0(t)+3\frac{16^n(2n)!(2n + 1)!}{(4n+3)( n -1)(4n)!}w_{4n+1, n+1, 2n}(t) \quad n>1.
\end{align*}

For  $j=2,\ldots,4n+1$,  $n>1$ set
$$
\tilde\phi_j=\begin{cases}\phi^0_j(2\cdot -1)\  0\le t<1 \\  0\qquad\qquad \text{ elsewhere},\end{cases}
$$
$$
\qquad\qquad\tilde\phi_1=\begin{cases}w_{4n+1,n+1,2n}(2\cdot -1) \ 0\le t<1 \\  0 \ \qquad\qquad \text{elsewhere}, \end{cases}
$$$$
\qquad\tilde \phi_0=\begin{cases}q_{r^{4n+1}_0}(2\cdot +1)\ -1\le t<0 \\  q_{l^{4n+1}_0}(2\cdot -1)\ \quad\ 0\le t<1\\ 0 \ \quad\qquad\qquad
\text{elsewhere}.\end{cases}
$$
Take
$\Phi_{4n+1} = (\tilde\phi_0,\ldots ,\tilde\phi_{4n+1})^T$ with  $V_0(\Phi_{4n+1})$ and $V_p$ be given by equation~\eqref{multiphi}.
Then the above results imply,
\begin{theorem}\label{tv2}
For $n> 1$,
$\Phi_{4n+1}$ generates an
orthogonal multiresolution analysis $\{V_p\}_{p=-\infty}^{\infty}$.
\end{theorem}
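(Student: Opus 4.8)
The plan is to deduce this exactly as Theorems~\ref{tv} and \ref{tv1} are deduced from Theorem~5.1 of \cite{dgha}, via the intertwining MRA construction of \cite{dgh}. That construction needs only a single nonzero function in the relevant subspace, here $w:=w^0_{4n+1,n+1,2n}$ from \eqref{genw}, subject to the intertwining relation $\langle (I-P_w)r^{4n+1}_0,(I-P_w)l^{4n+1}_0\rangle=0$, where $P_w$ is the orthogonal projection onto $\mathrm{span}\{w\}$; granted this, the construction produces generators $\tilde\phi_0,\dots,\tilde\phi_{4n+1}$ — the $\tilde\phi_j$ with $j\ge2$ spanning the $A_0$-part, $\tilde\phi_1$ carrying $w$, and $\tilde\phi_0$ gluing $q_{r^{4n+1}_0}$ and $q_{l^{4n+1}_0}$ at the knots — for which $C_0\perp C_1$, so that Theorem~\ref{theoremone} (with nesting and the Riesz-basis property inherited from the construction) yields an orthogonal MRA. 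Thus only two things need checking.

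First I would verify the intertwining relation. The parameter choice $n\to4n+1$, $m\to n+1$, $m_1\to2n$ gives $m+m_1=3n+1$, hence $3(4n+1)+1-4(m+m_1)=12n+4-4(3n+1)=0$; substituting this into the formula for $\tilde b^2_{m,m_1}-\tilde a_m\tilde a_{m_1}$ displayed just before the statement makes that quantity vanish, so the quadratic~\eqref{rmra} in $\alpha_{m,m_1}(n)$ is a perfect square with double root $\alpha_{n+1,2n}(4n+1)=-b_{n+1,2n}(4n+1)/a_{n+1}(4n+1)$, precisely the value recorded above. For that $\alpha$, \eqref{rmra} — equivalently \eqref{newint} — holds; and since $w$ is built from the even functions $u^0_{4n+1,2(n+1)}$ and $u^0_{4n+1,4n}$ it is itself even, so $(I-P_w)r^{4n+1}_0=q_{r^{4n+1}_0}$ and $(I-P_w)l^{4n+1}_0=q_{l^{4n+1}_0}$, making \eqref{newint} exactly $\langle q_{r^{4n+1}_0},q_{l^{4n+1}_0}\rangle=0$. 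I would also note in passing that these parameters respect the admissibility constraints on $u^0_{n,2m}$ used in Corollary~\ref{corrn0un0}, since $4n+1>2(n+1)$ and $4n+1=2(2n)+1$.

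Second I would check $w\neq0$. This is immediate from the value of $\langle w^0_{4n+1,n+1,2n},w^0_{4n+1,n+1,2n}\rangle$ displayed above, which carries the factor $(n-1)^2$ and is strictly positive exactly when $n>1$; at $n=1$ it vanishes, as it must, since there $m=m_1=2$ forces $\alpha_{2,2}(5)=-1$ and $w^0_{5,2,2}\equiv0$ — this is the reason for the hypothesis $n>1$. Combining the two checks with Theorem~\ref{theoremone} proves the theorem.

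The heavy lifting — the closed forms for $\langle r^n_0,l^n_0\rangle$, $\langle r^n_0,u^0_{n,2m}\rangle$ and $\langle u^0_{n,2m},u^0_{n,2m_1}\rangle$ — was already carried out in Lemmas~\ref{hyp1} and \ref{abp1}, Lemma~\ref{mt}, Theorem~\ref{rep1} and Corollary~\ref{corrn0un0}, so no new estimate is required. The only step I expect to demand genuine care, and hence the main (if mild) obstacle, is the arithmetic of the integer parameters: one must simultaneously force $3n+1-4(m+m_1)=0$ (so that $\alpha$ is rational and the intertwining condition is met), keep every $u^0_{n,2m}$ inside its admissible range, and pin down exactly which $n$ make the resulting $w$ nonzero — it is the interaction of these constraints, not any single inequality, that singles out the family $n\to4n+1$, $m\to n+1$, $m_1\to2n$ and the restriction $n>1$.
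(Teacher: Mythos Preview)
Your proposal is correct and follows exactly the route the paper takes: as with Theorems~\ref{tv} and \ref{tv1}, the paper gives no separate proof of Theorem~\ref{tv2} but simply records the parameter choice making the discriminant $\tilde b^2_{m,m_1}-\tilde a_m\tilde a_{m_1}$ vanish, writes down the resulting $\alpha$, $w$, $q_{r_0^{4n+1}}$, $q_{l_0^{4n+1}}$, and invokes Theorem~5.1 of \cite{dgha} together with Theorem~\ref{theoremone}. Your two checks---that \eqref{rmra} holds for the stated $\alpha$ and that $\langle w,w\rangle\neq0$ precisely when $n>1$ (the factor $(n-1)^2$), with the degenerate case $n=1$ explained by $m=m_1=2$ forcing $w\equiv0$---are exactly the content the paper builds up before stating the theorem.
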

\begin{remark}\label{norat}
It is interesting to note that even if we replace equation~\eqref{genw} to its most general
polynomial of degree at most $n$, there are values of $n$, i.e. $n=6$ for which  we cannot find an appropriate
polynomial $w$ with rational coefficients which leads to an orthogonal
continuous MRA.
\end{remark}
\section{Fourier transform}
The Fourier Transform of the scaling functions plays a useful role in wavelet theory \cite{da} and with the above representations we can give explicit formulas of the transform for the above functions.
We begin with
\begin{align}\label{ftpn}
&\hat \phi_{2n+\epsilon}(w)=\int_{-1}^{1}e^{-iwt}\phi_{2n+\epsilon}(t)dt=\int_0^1(e^{-iwt}+(-1)^{\epsilon}e^{iwt})\phi_{2n+\epsilon}(t)dt\nonumber\\&=h_n\int_0^1(e^{-iwt}+(-1)^{\epsilon}e^{iwt})t^{\epsilon}(1-t^2)\hypergeom21{-n+1,n+3/2+\epsilon}{1/2+\epsilon}{t^2}dt,
\end{align}
where equations~\eqref{altdefe} and \eqref{phip} have been used to obtain the last equation. Here 
$$
h_n = \frac{(-1)^{n-1}(\frac{1}{2}+\epsilon)_{n-1}}{(n+\frac{3}{2}+\epsilon)_{n-1}}.
$$
Expand the integral in the above equation to obtain
\begin{align*}
&I=h_n \int_0^1(e^{-iwt}+(-1)^{\epsilon}e^{iwt})t^{\epsilon}(1-t^2)\hypergeom21{-n+1,n+3/2+\epsilon}{1/2+\epsilon}{t^2}dt\\&=(-i)^{\epsilon}h_n \sum_{k=0}^{\infty}\frac{(-1)^kw^{2k+\epsilon}}{(2k+\epsilon)!(k+1/2+\epsilon)(k+3/2+\epsilon)}\\&\times\hypergeom32{-n+1,n+3/2+\epsilon,k+1/2+\epsilon}{1/2+\epsilon,k+5/2+\epsilon}{1}.
\end{align*}
The use of Lemma~\ref{hyp1} gives
\begin{align*}
&\hypergeom32{-n+1, n + 3/2 + \epsilon, k+\frac{1}{2}+\epsilon}{\frac{1}{2}+\epsilon, k+ \frac{5}{2}+\epsilon}{1}\\& = (-1)^{n + 1}(n-1)!(-\frac{1}{2} + \epsilon)\bigg(\frac{(k + \frac{3}{2} + \epsilon)(-\frac{3}{2} + \epsilon)(k + \frac{1}{2} +\epsilon)}{(-\frac{3}{2} + \epsilon)_{n + 3}}\\&- \frac{(k + \frac{3}{2} + \epsilon)(-\frac{3}{2} + \epsilon)(k + \frac{1}{2} + \epsilon)(-k - 1)_{n + 1}}{(-\frac{3}{2} + \epsilon)_{n + 3}(-n - k - \frac{1}{2} - \epsilon)_{n + 1}} - \frac{(n + 1)(k + \frac{3}{2} + \epsilon)(-k)_n}{(n + \frac{1}{2} + \epsilon)(-\frac{1}{2} + \epsilon)_n(-n - k - \frac{1}{2} - \epsilon)_n}\bigg).
\end{align*}
With the above formula write,
$$
I=I_1+I_2+I_3,
$$
where
$$
I_1=\frac{(-i)^{\epsilon}(n-1)!}{(n-\frac{1}{2}+\epsilon)_{n+1}}\sum_{k=0}^{\infty}\frac{(-1)^kw^{2k + \epsilon}}{(2k + \epsilon)!}=\frac{(-i)^{\epsilon}(n-1)!}{(n-\frac{1}{2}+\epsilon)_{n+1}}\cos(w-\epsilon\frac{\pi}{2}),
$$
$$
I_2=(-1)^{n-1}\frac{(-i)^{\epsilon}(n-1)!}{(n-\frac{1}{2}+\epsilon)_{n+1}}\sum_{k=0}^{\infty}\frac{(-1)^kw^{2k + 2n +  \epsilon}(-k - n - 1)_{n + 1}}{(2k + 2n + \epsilon)!(-2n - k - 1/2 - \epsilon)_{n + 1}},
$$
and
$$
I_3=(-1)^{n-1}\frac{(-i)^{\epsilon}(n-1)!(n + 1)}{(n+\frac{1}{2}+\epsilon)_{n}}\sum_{k=0}^{\infty}\frac{(-1)^kw^{2k +2n+ \epsilon}(-k-n)_n}{(2k + 2n+\epsilon)!(k + n+1/2 + \epsilon)(-2n - k - 1/2 - \epsilon)_ n}.
$$
Note that,
$$
\frac{(-k - n - 1)_{n + 1}}{(-2n - k - \frac{1}{2} - \epsilon)_{n + 1}}=\frac{(n + 1)!(n + 2)_k(n + \frac{1}{2} + \epsilon)_k(\frac{3}{2} + \epsilon)_{n-1}}{k!(2n + \frac{3}{2} + \epsilon)_k(\frac{3}{2} + \epsilon)_{2n}},
$$
$$
\frac{(-k - n )_ n}{(-2n - k - \frac{1}{2} - \epsilon)_n} =\frac{n!(n + 1)_k(n + \frac{3}{2} + \epsilon)_k(\frac{3}{2} + \epsilon)_{n}}{k!(2n + \frac{3}{2} + \epsilon)_k(3/2 + \epsilon)_{2n}},
$$
and 
$$
(2k+2n+\epsilon)!=2^{2k+2n+\epsilon}(n+1+\frac{\epsilon}{2})_k(n+\frac{1}{2}+\frac{\epsilon}{2})_k(1+\frac{\epsilon}{2})_n(\frac{1}{2}+\frac{\epsilon}{2})_n(\frac{1}{2})_{\epsilon}.
$$
Taking into account that $\epsilon\in\{0,1\},$ the above sums can be written as
\begin{align*}
I_2&=\frac{(-i)^{\epsilon}(-1)^{n-1}(n+1)(n-1)!}{(n-1/2 + \epsilon)_{n + 1}(\frac{1}{2})_{2n+1+\epsilon}}(\frac{w}{2})^{2n +  \epsilon}\sum_{k=0}^{\infty}\frac{(-1)^k(\frac{w}{2})^{2k}( n +2)_k}{k!(n+1)_k(2n +\frac{3}{2}+ \epsilon)_k}\\&=\frac{(-i)^{\epsilon}(-1)^{n-1}(n+1)(n-1)!}{(n-1/2 + \epsilon)_{n + 1}(\frac{1}{2})_{2n+1+\epsilon}}(\frac{w}{2})^{2n +  \epsilon}\\&\times\left(\hypergeom01{}{2n+\frac{3}{2}+\epsilon}{-(\frac{w}{2})^2}-\frac{(\frac{w}{2})^2}{(n+1)(2n+\frac{3}{2}+\epsilon)}\hypergeom01{}{2n+\frac{5}{2}+\epsilon}{-(\frac{w}{2})^2}\right)
\end{align*}
and
\begin{align*}
I_3&=\frac{(-i)^{\epsilon}(-1)^{n-1}(n-1)!(n + 1)}{(n+\frac{1}{2}+\epsilon)_n(\frac{1}{2})_{2n+1+\epsilon}}(\frac{w}{2})^{2n+\epsilon}\sum_{k=0}^{\infty}\frac{(-1)^k(\frac{w}{2})^{2k }}{k!(2n+\frac{3}{2}+\epsilon)_k}\\&=\frac{(-i)^{\epsilon}(-1)^{n-1}(n-1)!(n + 1)}{(n+\frac{1}{2}+\epsilon)_n(\frac{1}{2})_{2n+1+\epsilon}}(\frac{w}{2})^{2n+\epsilon}\hypergeom01{}{2n+\frac{3}{2}+\epsilon}{-(\frac{w}{2})^2}.
\end{align*}
Therefore
\begin{align}\label{ftpn1}
&\hat \phi_{2n+\epsilon}(w)\nonumber\\&=\frac{(-i)^{\epsilon}(n-1)!}{(n-\frac{1}{2}+\epsilon)_{n+1}}\cos(w-\epsilon\frac{\pi}{2})\nonumber\\&+\frac{(-1)^{n}(-i)^{\epsilon}(n-1)!}{(n-\frac{1}{2}+\epsilon)_{n+1}(\frac{1}{2})_{2n+2+\epsilon}}(\frac{w}{2})^{2n + 2+ \epsilon}\hypergeom01{}{2n+\frac{5}{2}+\epsilon}{-(\frac{w}{2})^2}\nonumber\\&+\frac{(-1)^{n-1}(-i)^{\epsilon}(n-1)!(n+1)(n+\frac{1}{2}+\epsilon)}{(n-\frac{1}{2}+\epsilon)_{n+1}(\frac{1}{2})_{2n+1+\epsilon}}(\frac{w}{2})^{2n +  \epsilon}\hypergeom01{}{2n+\frac{3}{2}+\epsilon}{-(\frac{w}{2})^2}.
\end{align}

Next we investigate
\begin{align}\label{ftln}
&\hat l^n_0(w)=\int_{-1}^1 e^{-iwt} l^n_0(t)dt=\frac{(-1)^{n-1}2(2n+1)!!}{(n+2)!}\int_{-1}^1 e^{-iwt} (1-t) p^{2,1}_{n-1}(t)dt\nonumber\\&=\frac{(n+1)}{2}\hat A(w),
\end{align}
where
$$
\hat A(w)=(-1)^{n-1}\int_{-1}^1 e^{-iwt} (1-t)\hypergeom21{-n+1,n+3}{3}{\frac{1-t}{2}}dt.
$$
Straightforward computations lead to
\begin{align*}
&\hat A(w)=4(-1)^{n-1}e^{-iw}\sum_{k=0}^{\infty}\frac{(2iw)^k}{k!}\sum_{j=0}^{n-1}\frac{(-n+1)_j(n+3)_j}{j!(3)_j}\frac{1}{k+j+2}
\\&=4(-1)^{n-1}e^{-iw}\sum_{k=0}^{\infty}\frac{(2iw)^k}{k!(k+2)}\hypergeom32{-n+1,n+3,k+2}{3,k+3}{1}.
\end{align*}
The use of Lemma~\ref{hyp1}  yields
$$
\frac{1}{k+2}\hypergeom32{-n+1,n+3,k+2}{3,k+3}{1}=-\frac{2}{n(n+2)}\left(\frac{(-1)^n}{n+1}-(-1)^n\frac{(k-n)_n}{(n+1)(k+2)_n}\right),
$$
so that
\begin{align}\label{ftln1}
\hat A(w)&=\frac{8e^{iw}}{n(n+1)(n+2)}-\frac{8e^{-iw}}{n(n+1)(n+2)}\sum_{k=0}^{\infty}\frac{(2iw)^k}{k!}\frac{(k-n)_n}{(k+2)_n}\nonumber\\&=\frac{8e^{iw}}{n(n+1)(n+2)}-\frac{8(-1)^ne^{-iw}}{n(n+1)^2(n+2)}\nonumber\\&-\frac{8e^{-iw}}{n(n+1)(n+2)}\sum_{k=1}^{\infty}\frac{(2iw)^k}{k!}\frac{(k-n)_n}{(k+2)_n}.
\end{align}
Beginning the above sum at $k=n+1$ then changing variables gives
\begin{align*}
\sum_{k=1}^{\infty}\frac{(2iw)^k}{k!}\frac{(k-n)_n}{(k+2)_n}&=\frac{(n+2)!}{(n+1)(2n+2)!}(2iw)^{n+1}\sum_{k=0}^{\infty}\frac{(n+1)_k(n+3)_k}{(n+2)_k(2n+3)_kk!}(2iw)^k\\&=\frac{(n+2)!}{(n+1)(2n+2)!}(2iw)^{n+1}\hypergeom22{n+1,n+3}{n+2,2n+3}{2iw}.
\end{align*}
Substitute this  into equation~\eqref{ftln1} then multiply by $\frac{n+1}{2}$ to find
\begin{align}\label{ftln2}
\hat l^n_0(w)&=\frac{4}{n(n+2)}e^{iw}-\frac{4(-1)^ne^{-iw}}{n(n+1)(n+2)}\nonumber\\&-\frac{4(n-1)!(2iw)^{n+1}e^{-iw}}{(2n+2)!}\hypergeom22{n+1,n+3}{n+2, 2n+3}{2iw}\nonumber\\&=\frac{4}{n(n+2)}e^{iw}-\frac{4(-1)^ne^{-iw}}{n(n+1)(n+2)}\\&-\frac{4(n-1)!(2iw)^{n+1}}{(2n+2)!}\bigg(\frac{iwn}{(n + 2)(2n + 3)}
    \hypergeom01{}{n + 5/2}{-(\frac{w}{2})^2}+ \hypergeom01{}{n + 3/2}{-(\frac{w}{2})^2}\bigg)\nonumber.
\end{align}
Here the fact that
\begin{align*}
\hypergeom22{n + 1, n + 3}{n + 2, 2n + 3}{2iw}& = e^{iw}\bigg(\frac{iwn}{(n + 2)(2n + 3)}\hypergeom01{}{n + 5/2}{-(\frac{w}{2})^2} \nonumber\\&+ \hypergeom01{}{n + 3/2}{-(\frac{w}{2})^2}\bigg)
\end{align*}
has been used to obtain the last equation.

Finally  consider,
\begin{align}\label{ftu}
\hat u_{m,n,\epsilon}(w)&=\int_{-1}^1 e^{-iwt}u_{2n+1+\epsilon,2n-2m}(t)dt\nonumber\\&=\int_{-1}^1 e^{-iwt}(-|t|t^{2m+\epsilon}+f^{\epsilon}_{m,n}(t))dt\nonumber\\&=\int_{0}^1 (e^{-iwt}+(-1)^{\epsilon}e^{iwt})(-t^{2m+1+\epsilon}+f^{\epsilon}_{m,n}(t))=B_1+B_2.
\end{align}
The first integral can be evaluated as
$$
B_1=-(-i)^{\epsilon}\sum_{k=0}^{\infty}(-1)^k\frac{w^{2k+\epsilon}}{(2k+\epsilon)!}\frac{1}{k+m+\epsilon+1}=-\frac{(-iw)^{\epsilon}}{m+\epsilon+1}\hypergeom12{m+1+\epsilon}{m+\epsilon+2,\frac{1}{2}+\epsilon}{-(\frac{w}{2})^2}.
$$
The second integral can be written as
\begin{align*}
B_2&=\frac{(\frac{1}{2}+\epsilon)_{n+1}(-m+\frac{1}{2})_n}{(m+1+\epsilon)_{n+1} n!}\int_{0}^1 (e^{-iwt}+(-1)^{\epsilon}e^{iwt})t^{\epsilon}\hypergeom32{-n,-m-\frac{1}{2},n+\frac{3}{2}+\epsilon}{\frac{1}{2}+\epsilon,-m+\frac{1}{2}}{t^2}dt\\&-\frac{(\frac{1}{2}+\epsilon)_{n}(-m-\frac{1}{2})_{n+1}}{(m+1+\epsilon)_{n+1} (n+1)!}\int_{0}^1 (e^{-iwt}+(-1)^{\epsilon}e^{iwt})t^{\epsilon}\hypergeom21{-n,n+\frac{3}{2}+\epsilon}{\frac{1}{2}+\epsilon}{t^2}dt.
\end{align*}
Write
\begin{align*}
&B_{2,1}=\int_{0}^1 (e^{-iwt}+(-1)^{\epsilon}e^{iwt})t^{\epsilon}\hypergeom32{-n,-m-\frac{1}{2},n+\frac{3}{2}+\epsilon}{\frac{1}{2}+\epsilon,-m+\frac{1}{2}}{t^2}dt\\&=(-i)^{\epsilon}\sum_{k=0}^{\infty}\frac{(-1)^kw^{2k+\epsilon}}{(2k+\epsilon)!}\sum_{j=0}^n\frac{(-n)_j(-m-\frac{1}{2})_j(n+\frac{3}{2}+\epsilon)_j}{j!(\frac{1}{2}+\epsilon)_j(-m+\frac{1}{2})_j}\frac{1}{k+j+\epsilon+1/2}\\&=(-i)^{\epsilon}\sum_{k=0}^{\infty}\frac{(-1)^kw^{2k+\epsilon}}{(2k+\epsilon)!}\frac{(-1)^{n+1}n!(m+\frac{1}{2})}{(k+m+\epsilon+1)(\frac{1}{2}+\epsilon)_{n+1}}\left(\frac{(-k)_{n+1}}{(-k-n-\frac{1}{2}-\epsilon)_{n+1}}-\frac{(m+1+\epsilon)_{n+1}}{(-n+m+\frac{1}{2})_{n+1}}\right),
\end{align*}
where Lemma~\ref{hyp1}  has been used to obtain the last equation.
Therefore after making the change of variables $k\to k+n+1$ in the first sum above then simplifying  utilizing the fact that $\epsilon\in\{0,1\}$ yields
\begin{align*}
&B_{2,1}\\&=(-iw)^{\epsilon}\frac{n!(m + 1/2)}{(n + m + \epsilon + 2)(1/2 + \epsilon)_{n + 1}(1/2 + \epsilon)_{2n + 2}}(\frac{w}{2})^{2n + 2 }\hypergeom12{n + m + \epsilon + 2}{n + m + \epsilon + 3, 2n + \frac{5}{2} + \epsilon}{-(\frac{w}{2})^2}\\& - (-iw)^{\epsilon}(-1)^{n + 1}\frac{n!(m + 2 + \epsilon)_n}{(1/2 + \epsilon)_{n + 1}(m - n + \frac{1}{2})_n}\hypergeom12{m + \epsilon + 1}{m + \epsilon + 2, 1/2 + \epsilon}{-(\frac{w}{2})^2}.
\end{align*}
Similar manipulations on the second integral in $B_2$ yield
\begin{align*}
&\int_{0}^1 (e^{-iwt}+(-1)^{\epsilon}e^{iwt})t^{\epsilon}\hypergeom21{-n,n+\frac{3}{2}+\epsilon}{\frac{1}{2}+\epsilon}{t^2}dt\\&=(-i)^{\epsilon}\sum_{k=0}^{\infty}\frac{(-1)^kw^{2k+\epsilon}}{(2k+\epsilon)!(k + \epsilon + 1/2)}\hypergeom32{-n, n+\frac{3}{2}+\epsilon, k+\epsilon+\frac{1}{2}} {\frac{1}{2}+\epsilon, k+\epsilon+\frac{3}{2}}{1}\\&=(-iw)^{\epsilon}\frac{(-1)^nn!}{(1/2 + \epsilon)_{n+1}}\hypergeom01{}{\frac{1}{2}+\epsilon}{-(w/2)^2}\\&+ (-iw)^{\epsilon}\frac{n!}{(\frac{1}{2}+\epsilon)_{n + 1}(\frac{1}{2}+\epsilon)_{2n+2}}(\frac{w}{2})^{2n + 2}\hypergeom01{}{2n+\frac{5}{2}+\epsilon}{-(\frac{w}{2})^2}.
  \end{align*}
  Thus 
  \begin{align}\label{ftuf}
&\hat u_{m,n,\epsilon}(w)\nonumber\\&=-(-iw)^{\epsilon}\frac{(-m-\frac{1}{2})_{n+1}}{(m + \epsilon + 1)_{n+2}(1/2 + \epsilon)_{2n + 2}}(\frac{w}{2})^{2n + 2 }\hypergeom12{n + m + \epsilon + 2}{n + m + \epsilon + 3, 2n + \frac{5}{2} + \epsilon}{-(\frac{w}{2})^2}\nonumber\\& -(-iw)^{\epsilon}\frac{(-1)^n(-m-1/2)_{n+1}}{(m+1+\epsilon)_{n+1}(n+1)(\frac{1}{2}+\epsilon+n)}\hypergeom01{}{\frac{1}{2}+\epsilon}{-(w/2)^2}\nonumber\\&- (-iw)^{\epsilon}\frac{(-m-\frac{1}{2})_{n+1}}{(n+1)(\frac{1}{2}+\epsilon+n)(m+1+\epsilon)_{n+1}(\frac{1}{2}+\epsilon)_{2n+2}}(\frac{w}{2})^{2n + 2}\hypergeom01{}{2n+\frac{5}{2}+\epsilon}{-(\frac{w}{2})^2}.
\end{align}
We summarize the above calculations in the following Theorem.
\begin{theorem} Let $\hat \phi_{2n+\epsilon}(w),\ \hat l^n_0(w)$, and $\hat u_{m,n,\epsilon}$ be the Fourier transforms of $\phi_{2n+\epsilon},\ l^n_0$, and $u_{2n+1+\epsilon,2n-2m}$ respectively. Then $\hat \phi_{2n+\epsilon}(w)$ is given by equation~\eqref{ftpn1}, $\hat l^n_0(w)$ is given by equation~\eqref{ftln2}, and $\hat u_{m,n,\epsilon}$ is given by equation~\eqref{ftuf}.
\end{theorem}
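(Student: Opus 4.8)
The plan is to prove the three formulas separately, but by a single common mechanism: start from the explicit hypergeometric representation of the function already established, fold the Fourier integral onto $[0,1]$ using the definite parity of the integrand, expand the kernel $e^{-iwt}+(-1)^{\epsilon}e^{iwt}$ (resp. $e^{-iwt}$) as a power series, interchange summation and integration, evaluate the resulting inner ${}_3F_2$ at argument $1$ by Lemma~\ref{hyp1}, and then recombine the two or three pieces that come out, re-indexing the tails so that they are recognized as confluent series ${}_0F_1$, ${}_1F_2$, or ${}_2F_2$ together with elementary trigonometric/exponential terms. For $\hat\phi_{2n+\epsilon}$ I would substitute the representation \eqref{phip} and the quadratic ultraspherical formula \eqref{altdefe}, so that on $[0,1]$ the integrand is $t^{\epsilon}(1-t^2)\,{}_2F_1(-n+1,n+\tfrac32+\epsilon;\tfrac12+\epsilon;t^2)$; expanding the kernel and integrating termwise gives $\sum_k c_k(w)\,{}_3F_2(\cdots;1)$, Lemma~\ref{hyp1} writes each ${}_3F_2$ as three Pochhammer expressions, and the sum splits as $I_1+I_2+I_3$. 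The first collapses to $\cos(w-\epsilon\pi/2)$; for $I_2,I_3$ one shifts the index to begin the series at $k=n$ or $k=n+1$ and uses the duplication identity for $(2k+2n+\epsilon)!$ to factor it into products of Pochhammer symbols, after which the remaining series are ${}_0F_1$'s, producing \eqref{ftpn1}.

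For $\hat l^n_0$ I would use Lemma~\ref{rhatr} to reduce $l^n_0$ to a multiple of $(1-t)p^{2,1}_{n-1}(t)$, insert the hypergeometric form \eqref{hyperone} of $p^{2,1}_{n-1}$ in the variable $(1-t)/2$, factor out $e^{-iw}$, and expand $e^{iw(1-t)}$ in powers of $1-t$. Carrying out the finite $j$-sum coming from the polynomial leaves $\sum_k \frac{(2iw)^k}{k!(k+2)}\,{}_3F_2(-n+1,n+3,k+2;3,k+3;1)$; Lemma~\ref{hyp1} simplifies this ${}_3F_2$, giving a term proportional to $e^{iw}$ plus a series which, after splitting off $k=0$ and re-indexing to start at $k=n+1$, becomes $(2iw)^{n+1}\,{}_2F_2(n+1,n+3;n+2,2n+3;2iw)$, i.e. \eqref{ftln2}.

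For $\hat u^{\epsilon}_{m,n}$ I would use Theorem~\ref{rep1} together with the decomposition of Lemma~\ref{fefo} to write $u^0_{2n+1+\epsilon,2n-2m}=-|t|t^{2m+\epsilon}+f^{\epsilon}_{1,m,n}+f^{\epsilon}_{2,m,n}$, fold onto $[0,1]$, and handle the three summands separately. The monomial integrates directly to a ${}_1F_2$. The $f^{\epsilon}_{1,m,n}$ contribution, after expanding the kernel and integrating termwise, is $\sum_k(\cdots)\,{}_3F_2(\cdots;1)$; Lemma~\ref{hyp1} splits it into two series, one of which re-indexes ($k\to k+n+1$) into a ${}_1F_2$ and the other into a ${}_0F_1$-type term. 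The $f^{\epsilon}_{2,m,n}$ contribution, being a ${}_2F_1$ in $t^2$, integrates to another ${}_3F_2$ at $1$, again simplified by Lemma~\ref{hyp1} into ${}_0F_1$ pieces. Assembling the three contributions and collapsing the Pochhammer prefactors, using $\epsilon\in\{0,1\}$ to rewrite factorials such as $(2k+2n+\epsilon)!$ and to combine $(\tfrac12+\epsilon)_{n+1}(\tfrac12+\epsilon)_{2n+2}$, yields \eqref{ftuf}.

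In every case the conceptual content is light; the real work, and the main place errors can enter, is the bookkeeping of shifted Pochhammer symbols through the index shifts and the verification of the cancellations that take place when the two or three pieces are recombined — in particular the precise point at which a tail series re-sums into a confluent hypergeometric function, and the exact normalizing constants (the powers of $2$, the factors $(n-1)!$, the factors $(-m-\tfrac12)_{n+1}$). To guard against slips I would cross-check each final formula at $w=0$ against the already-known integrals $\hat\phi_{2n+\epsilon}(0)$, $\hat l^n_0(0)=\langle r^n_0,l^n_0\rangle$-type values, and $\hat u^{\epsilon}_{m,n}(0)$ computed from the Mellin transform of Lemma~\ref{mt}.
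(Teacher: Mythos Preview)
Your proposal is correct and follows essentially the same approach as the paper: for each of the three functions you use the established hypergeometric representation, fold onto $[0,1]$ by parity (or factor out $e^{-iw}$ for $\hat l^n_0$), expand the exponential kernel termwise, evaluate the resulting ${}_3F_2$ at $1$ via Lemma~\ref{hyp1}, and then re-index the surviving tails into the confluent series ${}_0F_1$, ${}_1F_2$, ${}_2F_2$. The split $I_1+I_2+I_3$ for $\hat\phi_{2n+\epsilon}$, the $e^{iw}$-plus-tail decomposition for $\hat l^n_0$, and the $B_1+B_2$ split (with $f^{\epsilon}_{m,n}=f^{\epsilon}_{1,m,n}+f^{\epsilon}_{2,m,n}$) for $\hat u^{\epsilon}_{m,n}$ are exactly what the paper does, so the only remaining work is the Pochhammer bookkeeping you already flagged.
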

Note that the above ${}_0 F_1$s can be written in terms of Bessel functions of half integer order \cite[4.5.2]{aar}.
\section{ Refinement coefficients}
As noted in the introduction the scaling vector $\Phi_n$ satisfies the refinement equation
\begin{equation}\label{refine1}
\Phi_n(t)=\sum_{i=-2}^1C_i(n)\Phi_n(2t-i),
\end{equation}
where $C_i(n)$ is an $(n+1)\times (n+1)$ matrix given by
$$
C_i(n)=\tilde C_i(n)\langle \Phi_n(2\cdot) ,\Phi_n(2\cdot)\rangle^{-1},
$$
%with $C_n$ an $(n+1)\times (n+1)$ diagonal matrix given by
%\begin{equation}\label{refine2}
%C_n=\int_{-\infty}^{\infty} \Phi_n(2t) \Phi_n(2t)^Tdt=\int_{-\frac{1}{2}}^{\frac{1}{2}} \Phi_n(2t) \Phi_n(2t)^Tdt
%\end{equation}
with $\tilde C_i(n)$ the $(n+1)\times (n+1)$ matrix given by
\begin{equation}\label{refine3}
\tilde C_i(n)=\int_{-\infty}^{\infty} \Phi_n(t) \Phi_n(2t-i)^Tdt.
\end{equation}
From Theorem~4.5 in \cite{dgha} we have that there is a wavelet vector function $\Psi_n$ supported in $[-1,1]$ such that,
\begin{equation}\label{psi1}
\Psi_n(t)=\begin{pmatrix}\psi_0(t)\\ \vdots\\ \psi_n(t)\end{pmatrix}=\sum_{i=-2}^1D_i(n)\Phi_n(2t-i).
\end{equation}
Here
$D_i(n)$ is an $(n+1)\times (n+1)$ matrix given by
$$
D_i(n)=\tilde D_i(n)\langle \Phi_n(2\cdot) ,\Phi_n(2\cdot)\rangle^{-1},
$$
with
\begin{equation}\label{drefine2}
\tilde D_i(n)=\int_{-\infty}^{\infty}\Psi_n(t)\Phi_n(2t-i)^Tdt.
\end{equation}
The Lemma below discusses the symmetry properties of the scaling function which will be used to determine some properties of the entries in the $C_i(n)$ and $D_i(n)$ matrices.
We begin with 
\begin{lemma}\label{smyphi}
The $\phi$'s in equations~\eqref{phi0}, \eqref{phi1}  and \eqref{phij} have the following symmetries,
\begin{equation}\label{phisym3}
\tilde\phi_0(-t)=\tilde\phi_0(t),
\end{equation}
\begin{equation}\label{phisym2}
\tilde\phi_1(1-t)=(-1)^{n+1}\tilde\phi_1(t),
\end{equation}
and 
\begin{equation}\label{phisym1}
\tilde\phi_j(1-t)=(-1)^j\tilde\phi_j(t),\quad j=2,\ldots,n.
\end{equation}
\end{lemma}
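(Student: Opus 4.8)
The plan is to reduce all three identities to the parity of the building blocks — the functions $\phi^0_j$, the function $w^0_n$, and the pair $(q_{r^n_0},q_{l^n_0})$ — and then to read the symmetries off the piecewise definitions via the affine substitutions $t\mapsto 1-t$ and $t\mapsto -t$, which respectively preserve and interchange the relevant support subintervals.

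First I would record the parities. By \eqref{phip} with $k=0$ we have $\phi^0_j(s)=(1-s^2)p^{\frac{5}{2}}_{j-2}(s)$, and the monic ultraspherical polynomial $p^{\frac{5}{2}}_{j-2}$ is even or odd according to the parity of $j-2$ — immediate from \eqref{altdefe}, which exhibits $p^{\lambda}_{2\nu+\epsilon}$ as $s^{\epsilon}$ times a function of $s^{2}$ — so that $\phi^0_j(-s)=(-1)^{j}\phi^0_j(s)$. By \eqref{symw}, $w^0_n=\alpha(n)u^0_{n,0}+u^0_{n,2}$, and by Theorem~\ref{rep1} each summand is, on $[-1,1]$, of the form $-|t|t^{2m+\epsilon}+f^{\epsilon}_{m,\cdot}(t)$ with the same $\epsilon\in\{0,1\}$ (determined by the parity of $n$); since $|t|t^{2m+\epsilon}$ and $f^{\epsilon}_{m,\cdot}$ both have parity $(-1)^{\epsilon}$, we obtain $w^0_n(-s)=(-1)^{n+1}w^0_n(s)$ — the same sign is visible directly from the expansions of $w^0_{2n+1}$ and $w^0_{2n+2}$ displayed just before Theorem~\ref{tv}, which are manifestly even and odd. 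The relation $q_{l^n_0}(-t)=q_{r^n_0}(t)$ is built into the definition \eqref{ortoramp}. Finally, $\phi^0_j$ carries the factor $1-s^2$, $u^0_{n,2m}(\pm1)=0$ by the proof of Theorem~\ref{rep1}, and $r^n_0(-1)=0$, so these building blocks vanish at the outer endpoints; hence the identities below hold at every point, not just almost everywhere.

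Given this, \eqref{phisym1} follows because for $t\in[0,1]$ the point $1-t$ again lies in $[0,1]$ and $\tilde\phi_j(1-t)=\phi^0_j(2(1-t)-1)=\phi^0_j(-(2t-1))=(-1)^{j}\phi^0_j(2t-1)=(-1)^{j}\tilde\phi_j(t)$, both sides vanishing for $t\notin[0,1]$; \eqref{phisym2} follows in the same way from $\tilde\phi_1(1-t)=w^0_n(-(2t-1))=(-1)^{n+1}w^0_n(2t-1)=(-1)^{n+1}\tilde\phi_1(t)$; and for \eqref{phisym3} the map $t\mapsto -t$ interchanges the two defining pieces of $\tilde\phi_0$ on $[-1,0)$ and $[0,1)$, with $\tilde\phi_0(-t)=q_{r^n_0}(-(2t-1))=q_{l^n_0}(2t-1)=\tilde\phi_0(t)$ for $t\in[0,1)$ by \eqref{ortoramp}, the case $t\in(-1,0)$ being the same computation read in reverse, and both sides vanishing off $[-1,1]$. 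I do not anticipate a genuine obstacle: the argument is change-of-variables bookkeeping, and the only delicate point is pinning down the sign $(-1)^{n+1}$ in the parity of $w^0_n$ — one must match the index $n$ against the two parametrizations $n\mapsto 2\nu+1$ and $n\mapsto 2\nu+2$ used in Theorem~\ref{rep1} — together with keeping straight which affine map fixes, and which swaps, the support subintervals.
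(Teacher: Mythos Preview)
Your proposal is correct and follows essentially the same approach as the paper: both arguments reduce the three identities to the parities of the building blocks $\phi^0_j$, $w^0_n$, and the pair $(q_{r^n_0},q_{l^n_0})$, and then carry out the same affine change of variables. The only cosmetic difference is in \eqref{phisym3}: you invoke the relation $q_{l^n_0}(-t)=q_{r^n_0}(t)$ directly from \eqref{ortoramp}, whereas the paper re-derives it from the explicit projection formula using $l^n_0(t)=r^n_0(-t)$ and the parity of $w^0_n$ --- but since \eqref{ortoramp} already asserts this relation, your shortcut is justified.
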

%\begin{equation}\label{phisym1}
%\tilde\phi_j(1-t)=(-1)^j\tilde\phi_j(t),
%\end{equation}
%\begin{equation}\label{phisym2}
%\tilde\phi_1(1-t)=(-1)^{n+1}\tilde\phi_1(t),
%\end{equation}
\begin{proof}
The last equation follows from the symmetry properties of the ultraspherical polynomials i.e., $p_j^{\lambda}(-t)=(-1)^jp_j^{\lambda}(t)$ and the definition of $\phi^0_j(t),\ j=2,\ldots,n$. From equation~\eqref{symw} we see that $w_n(-t)=(-1)^{n+1} w_n(t)$ so the second equation above follows from the definition of $\tilde\phi_1^n(t)$ in equation~\eqref{phi1}. Finally  since
 \begin{align*}
 \langle w_n(t),l^n_0(t)\rangle=\langle w_n(-t),r^n_0(t)\rangle=(-1)^{n+1}\langle w_n(t),r^n_0(t)\rangle,
 \end{align*}
 it follows from the definition of $\tilde\phi^n_0$ in equation~\eqref{phi0} that if $t\in(0,1)$
 \begin{align*}
 \tilde\phi_0(-t)&=q_{r^n_0}(-2t+1)=r^n_0(-2t+1)-\frac{\langle w_n,r^n_0\rangle}{\langle w_n,w_n\rangle}w_n(-2t+1)\\&
 =l^n_0(2t-1)-\frac{\langle w_n,l^n_0\rangle}{\langle w_n,w_n\rangle}w_n(2t-1)=q_{l^n_0}(2t-1)=\tilde\phi_0(t).
 \end{align*}
 \end{proof}
The above equations imply relations among the entries in the $C_j(n)$ matrices.
\begin{theorem}\label{crefine}
Let
\begin{equation}\label{crefine1}
(\tilde C_j(n))_{i,k}=\int_{-\infty}^{\infty}\tilde\phi_i(t)\tilde\phi_k(2t-j)dt,\qquad j=-2,-1,0,1,\ i,k=0,\ldots,n,
\end{equation}
and 
$$
(C_j(n))_{i,k}=(\tilde C_j(n))_{i,k}/(\langle \Phi_n(2\cdot) ,\Phi_n(2\cdot)\rangle)_{k,k}.
$$
Then
\begin{align}\label{cm2ik}
&(C_{-2}(n))_{i,k}=\begin{cases} 0 &  i=0,\ldots,n ,\ k=0,\\
(-1)^{n+1}(C_1(n))_{i,k}&  i=0, \ k=1,\\
(-1)^k(C_1(n))_{i,k}&  i=0, \ k=2,\ldots n,\\
0&  i=1\ldots,n, \ k=1,\ldots,n,
\end{cases}
\\&(C_{-1}(n))_{i,k}=\begin{cases} (C_1(n))_{i,k} & i=0,\ k=0,\\
(-1)^{n+1}(C_0(n))_{i,k}&  i=0, \ k=1,\\
(-1)^k(C_0(n))_{i,k}& i=0, \ k=2,\ldots n,\\
0&  i=1,\ldots,n, \ k=0,\ldots,n,
\end{cases}
\\&(C_{0}(n))_{i,k}= \quad 0 \qquad\qquad\qquad\qquad\quad i=1,\ldots,n ,\ k=0,
\\&(C_{1}(n))_{i,k}=\begin{cases} (C_0(n))_{i,k} &  i=1,\ k=1,\\
(-1)^{k+n+1}(C_0(n))_{i,k}& i=1, \ k=2,\ldots,n,\\
(-1)^{i+n+1}(C_0(n))_{i,k}&  i=2,\ldots,n ,\ k=1,\\
(-1)^{i+k}(C_0(n))_{i,k}& i=2,\ldots,n, \ k=2,\ldots,n.
\end{cases}
\end{align}
\end{theorem}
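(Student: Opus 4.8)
My plan is to obtain all of the stated relations by applying the two reflections natural to this MRA to the refinement identity~\eqref{refine1} and matching coefficients. Since $\{V_p\}$ is an orthogonal MRA by Theorem~\ref{tv}, the functions $g_{k,j}:=\tilde\phi_k(2\cdot-j)$ ($k=0,\dots,n$, $j\in\Z$) form an orthogonal basis of $V_1$ with $\|g_{k,j}\|^2=(C_n)_{k,k}$ independent of $j$, and in this basis~\eqref{refine1} reads $\tilde\phi_m=\sum_{j=-2}^{1}\sum_{k=0}^{n}(C_j(n))_{m,k}\,g_{k,j}$ for every $m$; in particular the $g_{k,j}$-coefficient of $\tilde\phi_m$ vanishes as soon as $j\notin\{-2,-1,0,1\}$. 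Set $\epsilon_0=1$, $\epsilon_m=(-1)^m$ for $2\le m\le n$, and $\epsilon_1=(-1)^{n+1}$; by Lemma~\ref{smyphi} we then have $\tilde\phi_m(1-t)=\epsilon_m\tilde\phi_m(t)$ for $m\ge1$, $\tilde\phi_0(-t)=\tilde\phi_0(t)$, and $\tilde\phi_k(u)=\epsilon_k\tilde\phi_k(1-u)$ for $k\ge1$. A short computation with these identities shows how the two reflections permute the basis: for $k\ge1$ and all $j$,
\begin{align*}
g_{k,j}(-t)&=\epsilon_k\,g_{k,-1-j}(t),\qquad g_{0,j}(-t)=g_{0,-j}(t),\\
g_{k,j}(1-t)&=\epsilon_k\,g_{k,1-j}(t),\qquad g_{0,j}(1-t)=g_{0,2-j}(t).
\end{align*}

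First I would treat the rows indexed by $0$. Substituting $t\mapsto-t$ in the expansion of $\tilde\phi_0$ leaves the left side unchanged and turns the right side into $\sum_{j,k}(C_j(n))_{0,k}g_{k,j}(-t)$; comparing the coefficient of each $g_{k,j'}$ (using uniqueness of the expansion) gives $(C_{j'}(n))_{0,k}=\epsilon_k(C_{-1-j'}(n))_{0,k}$ for $k\ge1$ and $(C_{j'}(n))_{0,0}=(C_{-j'}(n))_{0,0}$. Reading off $j'=-2$ reproduces the row-$0$ entries of $C_{-2}(n)$ claimed in the statement (the factors $(-1)^k$ for $k\ge2$ and $(-1)^{n+1}$ for $k=1$ coming from $\epsilon_k$), reading off $j'=-1$ reproduces the row-$0$ entries of $C_{-1}(n)$, and the case $k=0$, $j'=-2$ pairs $(C_{-2}(n))_{0,0}$ with $C_{-(-2)}(n)=C_2(n)$, an out-of-range coefficient, so $(C_{-2}(n))_{0,0}=0$.

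Next I would treat the rows $m\ge1$. Substituting $t\mapsto1-t$ in the expansion of $\tilde\phi_m$ multiplies the left side by $\epsilon_m$, and comparing coefficients now gives $(C_{j'}(n))_{m,k}=\epsilon_m\epsilon_k(C_{1-j'}(n))_{m,k}$ for $k\ge1$ and $(C_{j'}(n))_{m,0}=\epsilon_m(C_{2-j'}(n))_{m,0}$. Taking $j'=1$ in the first relation yields $(C_1(n))_{m,k}=\epsilon_m\epsilon_k(C_0(n))_{m,k}$, which unwinds into the four cases of the $C_1(n)$ formula, and taking $j'=0$ in the second yields $(C_0(n))_{m,0}=\epsilon_m(C_2(n))_{m,0}=0$. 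The last zeros to account for, the rows $m\ge1$ of $C_{-2}(n)$ and $C_{-1}(n)$, fall out of the same comparison (their images land on the out-of-range shifts $g_{k,3}$ and $g_{k,2}$) or, more plainly, from the supports: $\mathrm{supp}\,\tilde\phi_m=[0,1]$ meets $\mathrm{supp}\,g_{k,-2}=[-1,-\tfrac12]$ and $\mathrm{supp}\,g_{k,-1}\subseteq[-1,0]$ only in a null set. Since~\eqref{refine1} is already written with $C_j(n)=\tilde C_j(n)C_n^{-1}$, everything above is phrased directly in terms of the $C_j(n)$, and the list in the theorem is complete.

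The step I expect to be fiddly is not conceptual but the index bookkeeping behind the display above: deriving the four basis actions from Lemma~\ref{smyphi} without sign slips — the integer shift transforms as $j\mapsto-1-j$ and $j\mapsto1-j$ when $k\ge1$ but as $j\mapsto-j$ and $j\mapsto2-j$ when $k=0$ — and then carrying the three separate conventions $\epsilon_0=1$, $\epsilon_m=(-1)^m$ ($m\ge2$), $\epsilon_1=(-1)^{n+1}$ through the coefficient comparison while keeping track of exactly which target shift has left the window $\{-2,\dots,1\}$ and hence forces a vanishing entry. The one genuinely non-elementary input is that the $g_{k,j}$ are a basis in which $\tilde\phi_m$ uses only shifts $j\in\{-2,\dots,1\}$, i.e.\ equation~\eqref{refine1} itself, which is what turns "the partner shift is out of range" into "that entry is zero".
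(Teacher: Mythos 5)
Your proposal is correct and takes essentially the same route as the paper: the paper applies the reflections $t\to -t$ and $t\to 1-t$ directly inside the integrals defining $(\tilde C_j(n))_{i,k}$ together with Lemma~\ref{smyphi} and support considerations, which is exactly the computation your coefficient matching in the orthogonal basis $\{\tilde\phi_k(2\cdot-j)\}$ encodes. Your explicit appeal to the shift range of \eqref{refine1} to kill out-of-range partners (e.g.\ for $(C_{-2}(n))_{0,0}$ and $(C_0(n))_{i,0}$, $i\ge1$) is a slightly more careful packaging of what the paper disposes of by support and ``similar arguments,'' but it is not a different method.
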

\begin{proof}
To show  the first line in equation~\eqref{cm2ik} note that supp$\Phi(t)\subseteq[-1,1]$ while supp$\tilde\phi_0(2t+2)=[-\frac{3}{2},-\frac{1}{2}]$ so $(\tilde C_2(n))_{i,0}=0,\ i=0,\ldots,n$. Likewise  the last line in equation~\eqref{cm2ik} follows since supp$\tilde\phi_i(t)=[0,1]$ for $i\ge1$ while supp$\tilde\phi_j(2t+2)=[-1,-\frac{1}{2}]$ for $j\ge1$. For the second and third lines in equation~\eqref{cm2ik} we have
$$
(\tilde C_{-2}(n))_{0,k}=\int_{-1}^{-\frac{1}{2}}\tilde\phi_0(t)\tilde\phi_k(2t+2)dt=\int^{1}_{\frac{1}{2}}\tilde\phi_0(-t)\tilde\phi_k(-2t+2)dt.
$$
The second line is obtained by using equations~\eqref{phisym2} and \eqref{phisym3} while the third line follows from  equations~\eqref{phisym1} and \eqref{phisym3}. The rest of the equations in the Theorem follow from similar arguments.
\end{proof}
We also have
\begin{lemma}\label{zeroco}
For $i$ odd $i>2$ and $i< j$ 
$$
(C_0(n))_{i,j}=0.
$$
For  $i$ odd $i>2$ and $i\le n$ 
$$
(C_0(n))_{i,1}=0.
$$
Finally for $n$ even
$$
(C_0(n))_{1,1}=0.
$$
\end{lemma}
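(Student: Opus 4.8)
The plan is to reduce each of the three identities to the vanishing of a single integral and then invoke orthogonality relations already in hand. By \eqref{refine2} and \eqref{crefine1} we have $(C_0(n))_{i,k}=(\tilde C_0(n))_{i,k}/(C_n)_{k,k}$ with $(C_n)_{k,k}=\int_{-\infty}^{\infty}\tilde\phi_k(2t)^2\,dt>0$, so in each case it suffices to prove that
\[
(\tilde C_0(n))_{i,k}=\int_{-\infty}^{\infty}\tilde\phi_i(t)\,\tilde\phi_k(2t)\,dt=0 .
\]
For $i,k\ge1$, $\tilde\phi_i$ is supported in $[0,1]$ and the function $t\mapsto\tilde\phi_k(2t)$ in $[0,\tfrac12]$, so the integral runs over $[0,\tfrac12]$, where $\tilde\phi_i(t)=\phi^0_i(2t-1)$ for $i\ge2$ and $w^0_n(2t-1)$ for $i=1$, while $\tilde\phi_k(2t)=\phi^0_k(4t-1)$ for $k\ge2$ and $w^0_n(4t-1)$ for $k=1$. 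First I would substitute $u=4t-1$, under which $2t-1=\tfrac{u-1}{2}$, obtaining
\[
(\tilde C_0(n))_{i,k}=\frac14\int_{-1}^{1}\theta_i(u)\,\eta_k(u)\,du ,
\]
where $\eta_k=\phi^0_k$ for $k\ge2$ and $\eta_k=w^0_n$ for $k=1$, while $\theta_i(u)=\phi^0_i\!\bigl(\tfrac{u-1}{2}\bigr)$ for $i\ge2$ and $\theta_i(u)=w^0_n\!\bigl(\tfrac{u-1}{2}\bigr)$ for $i=1$.

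Next I would show that, under the stated hypotheses, $\theta_i$ is a polynomial vanishing at $u=\pm1$, of degree $<j$ in the first statement and of degree $\le n$ in the other two. That $\theta_i(-1)=0$ holds in all cases, since $\phi^0_i(t)=(1-t^2)p^{5/2}_{i-2}(t)$ by \eqref{phip} vanishes at $-1$, and since $\bar u_m$ and $P^{n,0}\bar u_m$ both vanish at $\pm1$, so the functions $u^0_{n,0}$ and $u^0_{n,2}$ making up $w^0_n$ via \eqref{symw} and \eqref{unm} vanish at $\pm1$, whence $w^0_n(\pm1)=0$. The endpoint value $\theta_i(1)$ equals $\phi^0_i(0)=p^{5/2}_{i-2}(0)$, respectively $w^0_n(0)$, and this is where the hypotheses enter: for $i$ odd the polynomial $p^{5/2}_{i-2}$ is odd, so $\phi^0_i(0)=0$; for $n$ even $w^0_n$ is odd by Lemma~\ref{smyphi} and \eqref{phisym2}, so $w^0_n(0)=0$. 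That $\theta_i$ is genuinely a polynomial of the claimed degree is immediate for $i\ge2$ (degree $i$); for $i=1$ one uses that $\tfrac{u-1}{2}\in[-1,0]$ for $u\in[-1,1]$, so $\theta_1$ is the restriction of the polynomial branch of $w^0_n$ on $[-1,0]$, which has degree $\le n$ because by \eqref{symw} and \eqref{unm} $w^0_n$ is a combination of functions $(I-P^{n,0})\bar u_m$ with $m\le n$.

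Then the conclusion follows quickly. Writing $\theta_i(u)=(1-u^2)\rho_i(u)$, which is legitimate since $\theta_i(\pm1)=0$, one finds $\deg\rho_i\le i-2<j-2$ in the first statement and $\deg\rho_i\le n-2$ in the other two; in the latter case this places $\theta_i$ in $A^{n,0}_0(\tfrac{\cdot+1}{2})=\mathrm{span}\{\phi^0_2,\dots,\phi^0_n\}$. For the first statement, $\int_{-1}^{1}\theta_i(u)\phi^0_j(u)\,du=\int_{-1}^{1}\rho_i(u)\,p^{5/2}_{j-2}(u)\,(1-u^2)^2\,du=0$ by the defining orthogonality of the ultraspherical polynomial $p^{5/2}_{j-2}$ against $\rho_i$ of degree $<j-2$ with respect to the weight $(1-u^2)^2$. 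For the second and third statements, $\int_{-1}^{1}\theta_i(u)\,w^0_n(u)\,du=0$ because $w^0_n$, being a combination of the functions $(I-P^{n,0})(\cdot)$, is orthogonal to the range $A^{n,0}_0(\tfrac{\cdot+1}{2})$ of the projection $P^{n,0}$. In every case $(\tilde C_0(n))_{i,k}=0$. I expect the main obstacle to be not a single computation but the bookkeeping in the middle step: verifying that the rescaling $t\mapsto u=4t-1$ produces a bona fide polynomial $\theta_i$ of the asserted degree, and pinning down that $\theta_i(1)$ vanishes precisely when ``$i$ is odd'' (respectively ``$n$ is even''); once the membership $\theta_i\in A^{n,0}_0(\tfrac{\cdot+1}{2})$ is secured, the two orthogonality inputs are already available.
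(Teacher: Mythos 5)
Your proposal is correct and takes essentially the same route as the paper: after rescaling to $[-1,1]$, the first identity reduces to the orthogonality of $p^{5/2}_{j-2}$ to lower-degree polynomials with respect to the weight $(1-u^2)^2$ (using that odd-degree ultraspherical polynomials vanish at $0$), and the other two follow because the rescaled factor lies in $A^{n,0}_0(\frac{\cdot+1}{2})$, to which $w^0_n$ is orthogonal. The only difference is cosmetic: the paper phrases the last step as an explicit expansion of the restricted $\tilde\phi_i$ in the half-scale basis $\{\tilde\phi_j(2\cdot)\}_{j=2}^{n}$, whereas you phrase it as membership of $\theta_i$ in the span $\{\phi^0_2,\dots,\phi^0_n\}$.
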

\begin{proof}
For $i$ odd and $i>2$ we have $p_{i-2}^{\frac{5}{2}}(t) = t \pi_{i-3}(t)$, where  $\pi_{i-3}(t)$ is a polynomial of degree $i-3$. Therefore 
$\tilde\phi_i(t)=4t(1-t)p_{i-2}^{\frac{5}{2}}(2t-1) =4t(1-t) (2t-1) \pi_{i-3}(2t-1)$. Since  $\tilde\phi_j(2t)=8t(1-2t)p_{j-2}^{\frac{5}{2}}(4t-1)$, substituting these formulas into equation~\eqref{crefine1} we find 
$$
(\tilde C_0(n))_{i,j}=\int_{0}^{\frac{1}{2}}\tilde\phi_i(t)\tilde\phi_j(2t)dt=\int_{0}^{\frac{1}{2}} (1-2t)^2t^2\pi_{i-2}(t)p^{\frac{5}{2}}_{j-2}(4t-1) dt=0,
$$
where $\pi_{i-2}(t)=32(t-1)\pi_{i-3}(2t-1)$ is a polynomial of degree $i-2$. To see the second equation observe that for $i$ odd
$$
\tilde\phi_i(t)=t(1-2t)\tilde{\pi}_{i-2}(t)=\sum_{j=2}^{i}c_j\tilde\phi_j(2t).
$$
The result now follows since $\tilde\phi^n_1(2t)$ is orthogonal to $A_0^{n,0}(2\cdot)$. The third equation follows since for $n$ even and $t\in[0,\frac{1}{2}]$,
$$
\tilde\phi_1(t)=t(1-2t)\hat{\pi}_{n-2}(t)=\sum_{j=2}^{n}c_j\tilde\phi_j(2t),
$$
which is orthogonal to $\phi_1(2t)$.
\end{proof}

The theory of paraunitary operators \cite{hk}, \cite{lls}, \cite{ss} implies that  $n+1$ wavelets functions are needed to generate the wavelet space and these wavelets can be chosen with support in $[-1,1]$, see \cite[Theorem~4.4]{dgha}. The computation of the wavelets associated with these scaling functions is a basis completion problem and therefore non-unique. Even if we impose symmetry conditions on the wavelets similar to those of the scaling functions the problem has an infinite number of solutions. Corollary~5.2 in \cite{dgha} says that two wavelets will be supported on $[-1,1]$, one of them can be a symmetric function and the other an antisymmetric function, and the remaining $n-1$ wavelets can be constructed with support $=[0,1]$ and $\frac{1}{2}$ as an axis of symmetry. Here we give an algorithm for the construction of these wavelets. 

Corollary 5.2 in \cite{dgha}  shows that the two orthogonal wavelets not supported in $[0,1]$ can be constructed as
$$
\tilde\psi_0=(I-P_{\tilde\phi_0})\tilde\phi_0(2\cdot),
$$
and
$$
\tilde\psi_1=(\chi_{[0,1]}-\chi_{[-1,0]})(I-P_{\tilde\phi_0(2\cdot)})\tilde\phi_0.
$$
The remaining $n-1$ wavelet functions are supported on $[0,1]$ and must be orthogonal to $\tilde\phi_0$ and $\tilde\phi_0(2\cdot)$ which makes them orthogonal to the above two wavelets.
This is a sketch of the  construction. Build the matrices $C_{0}(n), C_1(n)$ and let $E(n)$ be the $(n+1)\times(2n+2)$ matrix 
$$
E(n)=( C_0(n), C_1(n)).
$$
 To construct the $n-1$ wavelet functions supported on $[0,1]$ we begin with the ones that are even with respect to $\frac{1}{2}$. From Lemma~\ref{smyphi} the wavelets supported on $[0,1]$ that are symmetric with respect to $\frac{1}{2}$ must have the same coefficients for $\tilde \phi_{2i}(2t)$ and $\tilde\phi_{2i}(2t-1)$, $i\ge1$ while the coefficients for $\tilde \phi_{2i+1}(2t)$ and $\tilde \phi_{2i+1}(2t-1)$, $i\ge 1$ must have the same magnitude but opposite signs. Furthermore the coefficients of $\tilde\phi_1(2t)$ and $\tilde\phi_1(2t-1)$ will be of the same magnitude but with a sign of $(-1)^{n+1}$.  Any wavelet function with support in $[0,1]$ that is symmetric with respect to $\frac{1}{2}$ must have a coefficient vector $p\in\R^{2n+2}$ given by
  $$
 p=[0,b_1,\ldots, b_{n},b_0, (-1)^{n+1}b_1,b_2,-b_3,\ldots,(-1)^n b_n]^T.
 $$
 Since the entry before $b_1$ is equal to zero  the wavelets constructed in this way will be orthogonal to $\tilde\phi_0(2\cdot)$. 
  Find all the solutions to $E(n)p=0$ then substitute these solution into $p$ and call this vector $v$ which gives rise to a subspace $M$ of the null space of $E$. If the number of free variables is more than one set one of the free variables, $b_i=1$ say, and the rest equal to zero. Denote this solution after being normalized as $v_1$. Let 
 $$
 \tilde\psi_2(t)=v_1^T \left(\begin{matrix}\Phi(2t)\\ \Phi(2t-1)\end{matrix}\right)
  $$
  then it follows from equations~\eqref{refine1} and \eqref{refine3}  that 
  $$
  \int_{-1}^1\tilde\psi_2(t)\tilde\phi_j(t)dt=\int_0^1\tilde\psi_2(t)\tilde\phi_j(t)dt=0,\ j=0,\ldots,n.
  $$
  This implies that $\int_{-1}^1\tilde\psi_2(t)\tilde\psi_j(t)dt=0,\ j=0,1$.
  
  Next find the subspace of $M$ perpendicular to $v_1$ and repeat the above procedure until an orthogonal basis of $M$ is found. This basis will give an orthogonal basis of vectors in the wavelet space $W_0$ supported in $[0,1]$ each of which is symmetric to $\frac{1}{2}$. Next we find the wavelets that are antisymmetric with respect to $\frac{1}{2}$. The coefficient vector must be of the form
 $$
 p=[0,b_1,\ldots, b_{n},0, (-1)^{n}b_1,-b_2,b_3,\ldots,-(-1)^{n} b_n]^T.
 $$
 Now carry out steps similar to the symmetric case. We now have
 \begin{theorem}\label{waveident}
Suppose $\tilde\psi_j$,  $j=2,\dots,n$ are constructed using the algorithm above. Then
the set of functions $\{ \tilde\psi_j(2^k\cdot-i),i,k\in\Z, j=0,\ldots,n \}$ forms an orthogonal, compactly supported, continuous, piecewise polynomial basis for $L^2(\R)$.  
\end{theorem}

 With the above multiwavelets it is not difficult to construct a basis for $L^2([0,1])$ and this was done in \cite[Theorem~5.3]{dgha}  which with a slight change of notation will be stated for the reader's convenience.
 Let 
\begin{align*}
&W_0 =W _0(\Psi_n)=\cl{\rm span}\{\tilde\psi_i({\cdot}-j): i = 0,1,\dots,n,\ j\in\Z\}\ \text{with}\nonumber\\& W_p = \{f(2^p{\cdot}): f\in W_0\}, \quad p\in\N.
\end{align*}
 We now have\footnote{We note here that the index $i$ in \cite[Theorems~5.3 and 6.3]{dgha} is supposed to start at $0$.}
 \begin{theorem}\label{wavee}
 Suppose $\tilde\psi_j$,  $j=2,\dots,n$ are constructed using the algorithm above. Then for $k\ge0$ the set
$\{\tilde\phi_i(2^k t-j)|_{[0,1]}\st i=0, \dots,n,\ 0\le j\le 2^k-1+\delta_{0,i}\}$
is an orthogonal basis for $\bar V_k = V_k\cap L^2[0,1]$ while
$\{\tilde\psi_i(2^k t-j)|_{[0,1]}\st \ i = 0,\dots,n,
  \ \delta_{1,i}\le j \le 2^k-1+\delta_{0,i}\}$
forms an orthogonal basis for $\bar W_k = W_k\cap L^2[0,1]$.
Furthermore $\cl\bar V_0\oplus\bigoplus_{k\ge 0}\bar W_k = L^2[0,1]$.
\end{theorem}
Thus roughly speaking, in order to obtain a basis for $L^2[0,1]$ we take the nonzero restrictions of any scaling functions in $V_0$ to $[0,1]$ as well as any wavelet function whose support is totally in $[0,1]$ plus the restrictions of $\tilde\psi_0(2^k\cdot),\ k\ge0$ and $\tilde\psi_0(2^k(\cdot-1)),\ k\ge0$  to $[0,1]$.

Although the above discussion is for the MRA given by the equations~\eqref{phi0}, \eqref{phi1}, and \eqref{phij} it is straightforward to adapt the methods  to the other MRAs discussed in section 4.
\section{Acknowledgements}
JSG would like to thank Kolja Brix for pointing out errors in the graphs and tables of the scaling functions in \cite{dgha} and 
providing his computations related to the scaling functions and wavelets. He would also like to thank G. Donovan for discussions related to the wavelet construction algorithm.

The present version of the paper owes a lot to a referee whose numerous corrections helped us improve considerably the exposition.

\end{document}